\newcommand{\quash}[1]{}
\newtheorem{defin}{Definition}
\newtheorem{prop}{Proposition}
\newtheorem{nt}{Remark}
\newtheorem{Th}{Theorem}
\newtheorem{lemma}{Lemma}
\newtheorem{cons}{Corollary}
\newtheorem{defin-prop}{Definition-proposition}
\newfont{\ssdbl}{msbm8}
\newfont{\sdbl}{msbm9}
\newfont{\dbl}{msbm10 at 12pt}
\newcommand{\oo}{{\cal O}}
\newcommand{\ff}{{\cal F}}
\newcommand{\g}{{\cal G}}
\newcommand{\ad}{{\cal A}}
\newcommand{\res}{\mathop {\rm res}}
\newcommand{\Dim}{\mathop {\rm Dim}}
\newcommand{\Hom}{\mathop {\rm Hom}}
\newcommand{\Aut}{\mathop {\rm Aut}}
\newcommand{\tr}{\mathop {\rm Tr}}
\newcommand{\Spec}{\mathop {\rm Spec}}
\newcommand{\Frac}{\mathop {\rm Frac}}
\newcommand{\da}{\mathbb{A}}
\newcommand{\dz}{\mathbb{Z}}
\newcommand{\Z}{\dz}
\newcommand{\Ker}{{\rm Ker}\:}
\newcommand{\f}{{\cal F}}
\newcommand{\lrto}{\longrightarrow}
\newcommand{\E}{{\cal E}}
\def\Z{{\mathbb Z}}
\newcommand{\diag}{{\rm diag}}
\begin{document}

\author{
D. V. Osipov
}

\title{Central extensions and Riemann-Roch theorem on algebraic surfaces \thanks{The  author was supported by the Basic Research Program of the National Research University Higher School of Economics.}}
\date{}

\maketitle

\begin{abstract}
We study   canonical central extensions of the general linear group over the ring of adeles on a smooth  projective algebraic surface $X$ by means of the group of integers.
By these central extensions and adelic transition matrices of a rank $n$ locally free sheaf of $\oo_X$-modules  we obtain the local (adelic) decomposition for the difference of Euler characteristics of this sheaf   and the sheaf $\oo_X^n$. Two various calculations of this difference lead to the Riemann-Roch theorem on $X$ (without the Noether formula).
\end{abstract}

\section{Introduction}  \label{Intro}

This paper is about locally free sheaves, adeles and the Riemann-Roch theorem on algebraic surfaces.

But first, we briefly recall the well-known case of algebraic curves.

Recall  that two vector subspaces $A$ and $B$
in a vector space $V$ over a field $k$ are commensurable (see~\cite{T}), i.e. $A \sim B$, iff
$$
\dim_k (A + B)/ (A \cap B)   < \infty  \, \mbox{.}
$$
For such $A$ and $B$ we denote their relative dimension
$$
[A \mid B] = \dim_k B / (A \cap B) - \dim_k A / (A \cap B)  \, \mbox{.}
$$

We fix a $k$-vector subspace $K$ in $V$ and any $k$-vector subspaces $D_1 $ and $D_2 $ in $V$
such that
$D_1 \sim D_2$ and
$$ H^0(D_i)= D_i \cap K        \qquad \qquad H^1(D_i)= V/ (D_i + K)$$
are finite-dimensional $k$-vector spaces for $i \in \{1,2\}$.
We denote for $i \in \{ 1,2 \}$
$$\chi (D_i)  = \dim_k H^0(D_i) - \dim_k H^1(D_i) \, \mbox{.} $$

Then we have (see, e.g., \cite[\S~14.14, Exer.~14.59--14.61]{K}) ``the abstract Riemann-Roch theorem'':
\begin{equation} \label{ARR}
 \chi(D_1) - \chi(D_2) = [D_2   \mid D_1] \, \mbox{.}
\end{equation}

Let $n \ge 1$ be an integer.

For a smooth projective curve $S$ over $k$ we consider
$$V = \da_S^n \, \mbox{,} \qquad  \mbox{where} \qquad \da_S = {\mathop{\prod}\limits_{p \in S}}' K_p$$
 is  the space of adeles of the curve $S$,
and $K_p$ is the field of fraction of the completion $\hat{\oo}_p$ of the local ring $\oo_p$ at a (closed) point $p \in S$. We consider $K = k(S)^n$,
 where $k(S)$ is the field of rational functions on~$S$.

Now we consider  a  locally free sheaf  $\ff$ of $\oo_S$-modules of rank $n$  on $S$.
The stalk of $\ff$ at the generic point $\Spec k(S)$ of $S$
 is a $k(S)$-vector space.
 We fix a basis $e_0$ of this vector space.   For any (closed) point  $p \in S$,
 the completion of the stalk $\ff_p$ of $\ff$ at $p$ is a free $\hat{\oo}_p$-module. We
   fix a basis $e_p$ of this module. For all  (closed) points  $p \in S$, we consider the transition matrices
$\gamma_{01,p} \in GL_n(K_p)$ defined by the equality $e_0 = \gamma_{01,p} e_p $, which is calculated in the  $K_p$-vector space $\ff_p \otimes_{\oo_p} K_p$. The element given by the collection of matrices
$$\gamma_{01, \ff}=  \prod\limits_{p \in S} \gamma_{01, p}  \;  \in  GL_n \left( \prod_{p \in S} K_p \right) \mbox{,}$$
 belongs to the subgroup
$GL_n(\da_S)$.

We note that the chosen and fixed basis $e_0$ gives an embedding of $\ff$ to a constant sheaf $K$ on $S$.
Therefore we can associate by $\ff$
a $k$-vector subspace $D_{\ff}$  in $V$ explicitly given~as
$$D_{\ff}= \gamma_{01, \ff} D \,  \mbox{,} \qquad   \mbox{where} \qquad
D =(\prod_{p \in S} \hat{\oo}_p)^n $$
is a $k$-vector subspace of $V$.

 We note that $D_{\ff} \sim D$.

Now, applying formula~\eqref{ARR} for the $k$-vector subspaces $D_{\ff}$ and $D$  and using the adelic complex for $\ff$ on $S$ that gives
$$
H^i(D_{\ff}) = H^i(S, \ff)  \qquad \mbox{and}  \qquad
H^i(D)= H^i(S, \oo_S^n)
$$
for $i \in \{0, 1 \}$, we obtain the Riemann-Roch theorem for $\ff$ on~$S$:
\begin{equation}  \label{diff-1}
\chi(\ff)  - n \chi(\oo_S) = c_1( \ff)  \, \mbox{,}
\end{equation}
where $\chi (\ff)$ is the Euler characteristic of the sheaf $\ff$  on $S$.
And the first Chern number $c_1(\ff)$
 is obtained from the homomorphism of groups:
\begin{equation}  \label{deg}
\deg \: : \; GL_n(\da_S)   \lrto \Z     \quad \mbox{,}   \quad  a \longmapsto [a D \mid D]
\end{equation}
such that $c_1(\ff) = \deg(\gamma_{01, \ff})$.

This can be called the {\em local (or adelic) decomposition} for the difference of Euler characteristics of the sheaf $\ff$ and the sheaf $\oo_S^n$.

\medskip

Now let $X$ be a smooth projective algebraic surface over $k$. Then there is the Parshin-Beilinson adelic ring $\da_X$ of $X$
(see~\cite{Par, B, H, Osip1, Par6} and Section~\ref{sec-centr} below). We have
$$
\da_X = {\prod_{x \in C}}' K_{x,C}  \subset \prod_{x \in C} K_{x,C}  \, \mbox{,}
$$
where the (``two-dimensional'') adelic product is taken over all pairs $x \in C$, where $C$ is an irreducible curve on $X$, and $x$ is a point on $C$, and an Artinian ring $K_{x,C}$ is a finite direct product of two-dimensional local fields such that this product consists of one field provided that $x$ is smooth on $C$.  (Here a point  $x$  is a usual closed point.) Every two-dimensional local field which appear here is isomorphic to a field of iterated Laurent series $k'((u))((t))$, where a field $k'$ is a finite extension of $k$.

Besides, instead of the homomorphism~\eqref{deg} we have now a canonical central extension (see more  in Section~\ref{1-ext} below):
$$
0 \lrto \Z \lrto \widetilde{GL_n(\da_X)}  \lrto GL_n(\da_X)  \lrto 1 \, \mbox{.}
$$

The goal of this paper is to connect this central extension with the Riemann-Roch theorem for a locally free sheaf of $\oo_X$-modules of  rank $n$ on $X$ using the transition matrices for this sheaf, where these transition matrices are obtained from  the bases of
the completions of the stalks of the sheaf at scheme points of $X$.

From this central extension one obtains canonically another   central extension $\widehat{GL_n(\da_X)}$  (see Section~\ref{2-ext}) of $GL_n(\da_X)$ by $\Z$ such that by transition matrices $\alpha_{ij} \in GL_n(\da_X)$ ($i \ne j$ are from $\{0,1,2 \}$)
for a rank $n$ locally free sheaf of $\oo_X$-modules on $X$  and by the central extension $\widehat{GL_n(\da_X)}$  it is possible to obtain   the second Chern number $c_2$ of this sheaf. This was done in~\cite{Osip3},
see also Remark~\ref{c2}. Besides,  transition matrices $\alpha_{ij}$ are analogs for a locally free sheaf of $\oo_X$-modules on the surface $X$ of a transition matrix $\gamma_{01}$
for a locally free sheaf of $\oo_S$-modules on the curve $S$, see the reasoning above. (We have omitted here the indication on the sheaf in the notation for transition matrices.)

To solve   the above tasks we use  canonical lifts  of  transition matrices $\alpha_{ij}$ of a sheaf from the group $GL_n(\da_X)$ to the groups $\widetilde{GL_n(\da_X)}$ and $\widehat{GL_n(\da_X)}$.

Thus we obtain the {\em local (or adelic) decomposition} for the differences of Euler characteristics
of a rank $n$ locally free sheaf of $\oo_X$-modules   and the sheaf $\oo_X^n$, see Remark~\ref{lo-pr-2}.

We can say also that the main ingredient of this paper is the calculation of the integer
$\widetilde{\alpha_{02}} \cdot \widetilde{\alpha_{21}}   \cdot \widetilde{\alpha_{10}}$, where $\widetilde{\alpha_{ij}}$ is the canonical lift of
$\alpha_{ij}$ from $GL_n(\da_X)$ to  $\widetilde{GL_n(\da_X)}$. This integer does not depend on the choice of transition matrices $\alpha_{ij}$.

We do this calculation in two ways. The first way leads to Theorem~\ref{Th1} and uses adelic complexes  for rank $n$  locally free sheaves of $\oo_X$-modules  on $X$, see Proposition~\ref{Euler}. For the second way we suppose that a basic field $k$ is perfect, and we use the ``self-duality'' of the adelic space $\da_X$ based on reciprocity laws on $X$ for residues of differential two-forms on two-dimensional local fields introduced and studied in~\cite{Par}. This another way leads in Theorem~\ref{Th2} to  an answer,
which uses also another invariants of a sheaf   and $X$. The comparison of these two answers (after Theorems~\ref{Th1} and~\ref{Th2}) gives the Riemann-Roch theorem for a rank $n$ locally free sheaf of $\oo_X$-modules  on $X$ (without the Noether formula).

We note that the relation of the Riemann-Roch theorem on an algebraic surface with the local constructions was also discussed in~\cite{BS,FT, Sh}, but without using an adelic ring on a surface.

The paper is organized as follows.

In Section~\ref{adeles-first} we very briefly recall on adeles on algebraic surfaces and also recall  the construction of
the central extension $\widetilde{GL_n(\da_X)}$ of the group $GL_n(\da_X)$ by the group $\Z$.

In Section~\ref{2-ext} we recall the construction   of
the central extension $\widehat{GL_n(\da_X)}$ of the group $GL_n(\da_X)$ by the group $\Z$. This central extension  is obtained from the central extension $\widetilde{GL_n(\da_X)}$. In Remark~\ref{group-cohom} we  compare these central extensions from the point of view of group cohomology.

In Section~\ref{special}   we construct special elements in $\Z$-torsors. These elements are related to the special $k$-vector subspaces and  subrings in $\da_X$. The subspaces and subrings  are connected with points and irreducible curves on $X$.

In Section~\ref{intersect} we discuss various formulas (formula~\eqref{inter-index} and Proposition~\ref{prop-inter-index}) for the intersection index of divisors on $X$
related with special elements constructed in Section~\ref{special} and with the central extension  $\widetilde{GL_1(\da_X)}$.

In Section~\ref{splittings} we construct canonical splittings of the central extension $\widetilde{GL_n(\da_X)}$ over special subgroups of the group $GL_n(\da_X)$. We  discuss the properties of these splittings. In Remark~\ref{analog} we recall the corresponding splittings and their properties for the central extension $\widehat{GL_n(\da_X)}$  from~\cite{Osip3}.

In Section~\ref{trivializations}  we introduce the transition matrices $\alpha_{ij} \in GL_n(\da_X)$ for a rank $n$ locally free sheaf $\E$ of $\oo_X$-modules  on $X$ and canonical lifts $\widetilde{\alpha_{ij}}$ of $\alpha_{ij}$ to $\widetilde{GL_n(\da_X)}$ by means of canonical splittings from Section~\ref{splittings}.

In Section~\ref{first-way} we calculate the integer
$$f_{\E} = \widetilde{\alpha_{02}} \cdot \widetilde{\alpha_{21}}   \cdot \widetilde{\alpha_{10}} =
(\chi(\E) - n \chi(\oo_X)) - 2 {\rm ch}_2(\E)  $$
in the first way, where the rational number ${\rm ch}_2(\E) = \frac{1}{2}c_1(\E)^2 -c_2(\E)$.

In Section~\ref{second-way}, when the field $k$ is perfect  we calculate the integer
$$
f_{\E} = -\frac{1}{2} K \cdot c_1(\E) - {\rm ch}_2 (\E)
$$
in the second way, where $K \simeq \oo_X(\omega)$, $\omega \in \Omega^2_{k(X)/k}$, $\omega \ne 0$. We derive the Riemann-Roch theorem for $\E$.

I am grateful to A.\,N. Parshin for some discussions and comments. Actually, the motivation for this paper was discussion with him  that the  adelic technique on  an algebraic surface should imply the Riemann-Roch theorem.

\section{Constructions of central extensions of $GL_n(\da_X)$}  \label{sec-centr}

\subsection{Adelic ring on a surface and a first central extension}   \label{1-ext}

\label{adeles-first}

As we have already mentioned in \S~\ref{Intro} (Introduction), throughout the article,   $X$ is a smooth projective algebraic surface over a field  $k$. But for the constructions in \S~\ref{sec-centr}, it is not important that  $X$ is projective.

We recall (see, e.g., survey~\cite{Osip1} and also~\cite[\S~2.1]{Osip3}) that the adelic ring of $X$ is
$$
\da_X = \da_X = {\prod_{x \in C}}' K_{x,C}  \subset \prod_{x \in C} K_{x,C}
$$
and every ring $K_{x,C}= \prod_i K_i$ is the finite direct product of two-dimensional local fields $K_i$ such that every two-dimensional local field $K_i$ corresponds to the formal branch of $C$ at~$x$.

We define   a subring $\da_{12}  \subset \da_X$:
$$
\da_{12} = \da_X \cap  \prod_{x \in C}  \oo_{K_{x,C}}  \subset \prod_{x \in C} K_{x,C}    \, \mbox{,}
$$
where $\oo_{K_{x,C}} = \prod_i \oo_{K_i}$ is the finite direct product of discrete valuation rings $\oo_{K_i}$ of $K_i$. (If $K_i$ is isomorphic to $k_i((u))((t))$, then $\oo_{K_i}$ is isomorphic to $k_i((u))[[t]]$.)

For any locally linearly compact $k$-vector space (or, in other words, a Tate vector space) $U$ we recall canonical construction of $\dz$-torsor $\Dim(U)$ from~\cite{Kap}.
As a set, $\Dim(U)$ consists of all maps $d$ (which are called ``dimension theories'' in~\cite{Kap}) from the set of all open linearly compact $k$-vector subspaces of $U$ to $\Z$ with the property
$$
d(Z_2)= d(Z_1) + [Z_1 \mid Z_2] \, \mbox{,}
$$
where $Z_1, Z_2$ are any open linearly compact $k$-vector subspaces of $U$. The group $\dz$ acts on $\Dim(U)$  in the following way:
$$
(m + d)(Z_1)= d(Z_1) +m   \,  \mbox{,}  \quad \mbox{where} \quad m \in \Z  \, \mbox{.}
$$

For any exact sequence of locally linearly compact $k$-vector spaces
\begin{equation}  \label{ex-seq}
0 \lrto U_1  \stackrel{\phi_1}{\lrto} U_2 \stackrel{\phi_2}{\lrto} U_3 \lrto 0  \, \mbox{,}
\end{equation}
where $\phi_1, \phi_2$ are continuous maps and $\phi_1$ is a closed embedding, we have a canonical isomorphism:
\begin{gather}  \label{dim-iso}
\Dim(U_1)  \otimes_{\Z} \Dim(U_3)  \lrto \Dim(U_2) \, \mbox{,}  \\  d_1 \otimes d_3 \longmapsto d_2 \, \mbox{,} \quad
d_2(Z) = d_1(Z \cap U_1)  + d_3(\phi_2(U_1))  \notag  \, \mbox{,}
\end{gather}
where $Z$ is an open linearly compact $k$-vector subspace of $U_2$.

For any  locally linearly compact $k$-vector space $U$
we define the locally linearly compact  $k$-vector space $\check{U}$ as a $k$-vector subspace of the dual $k$-vector space $U^*$
in the following way:
$$
\check{U} = \bigcup_{W} W^{\perp}  \, \mbox{,}
$$
where $W$ runs over all open linearly compact  $k$-vector subspaces of $U$, the $k$-vector subspace $W^{\perp}  \subset U^*$ is the annihilator of $W$
in $U^*$, and $W^{\perp}$, which is the dual vector space to the discrete vector space $U/W$, is an open linearly compact $k$-vector subspace
of $\check{U}$. In other words, $\check{U}$ is the continuous dual space, i.e., it consists of all continuous linear functionals.
We have a canonical isomorphism:
\begin{equation}  \label{dual}
\Dim(U) \otimes_{\Z}  \Dim(\check{U})  \simeq \Z   \, \mbox{,}  \qquad d_1 \otimes d_2 \longmapsto d_1(Z) + d_2(Z^{\perp})  \, \mbox{,}
\end{equation}
where $Z \subset U$ is a linearly compact $k$-vector subspace, and the result does not depend on the choice of $Z$.

Let $D = \sum_{i} a_i C_i$ be any divisor on $X$, where $C_i$ are irreducible curves on $X$.  We denote
$$
\da_{12}(D)= \da_X \cap \prod_{x \in C} t_C^{-\nu_C(D)} \oo_{K_{x,C}}  \, \mbox{,}
$$
where the intersection is taken inside $\prod_{x \in C}  K_{x,C}$, and $t_C=0$ is an equation of an (irreducible) curve $C$ in some open  subset of $X$,
$\nu_C(D) $ equals $a_i$ when $C = C_i$ and zero otherwise. (The definition of $\da_{12}(D)$ does not depend on the choice of $t_C$.)

We note that
$$
\da_X = \mathop{\lim_{\lrto}}_{D_2} \mathop{\lim_{\longleftarrow}}_{D_1 \ge D_2} \da_{12}(D_2) / \da_{12}(D_1)    \mbox{,}
$$
and $\da_{12}(D_2) / \da_{12}(D_1)$ is a locally linearly compact $k$-vector space,
and for any divisors $D_1 \ge D_2 \ge D_3$ on $X$ the exact sequence
$$
0 \lrto    \da_{12}(D_2) / \da_{12}(D_1)     \lrto     \da_{12}(D_3) / \da_{12}(D_1)   \lrto \da_{12}(D_3) / \da_{12}(D_2)   \lrto 0
$$
is of type~\eqref{ex-seq},
see, e.g.,~\cite[\S~2.2.3]{Osip2}, \cite[\S~2.3]{Osip3}.

Let $n \ge 1$ be an integer.

\begin{defin} \label{lattice}
A $k$-vector subspace $E$  of $\da_X^n$
is called a {lattice} if and only if  there are divisors $D_1$ and $D_2$  on $X$ such that
$$
\da_{12}(D_1)^n  \subset E  \subset \da_{12}(D_2)^n
$$
and the image of $E$ in $\da_{12}(D_2)^n/ \da_{12}(D_1)^n$ is a closed $k$-vector subspace.
\end{defin}

If $E_1 \subset E_2$ are lattices,
then $E_2/ E_1$
is a locally linearly compact $k$-vector space with the quotient and induced topology from the locally linearly compact $k$-vector space
$ \da_{12}(D_2)^n/ \da_{12}(D_1)^n $. In this case we define a $\Z$-torsor
$$
\Dim(E_1 \mid E_2) = \Dim(E_2/E_1)   \, \mbox{.}
$$

 If $E_1 \subset E_2 \subset E_3$ are lattices, then an exact sequence
$$
0 \lrto E_2/E_1 \lrto E_3 / E_1 \lrto E_3 / E_2 \lrto 0
$$
is of type~\eqref{ex-seq}.

Now for arbitrary lattices $E_1$ and $E_2$  we define a  $\Z$-torsor
$$
\Dim(E_1 \mid E_2) = \mathop{\lim_{\lrto}}_E  \Hom\nolimits_{\Z} (\Dim(E_1/E), \Dim(E_2/E))  \, \mbox{,}
$$
where the direct limit is taken over all lattices $E \subset \da_X^n$ such that  $E \subset E_i$ for $i=1$ and $i=2$. Here  we use the following isomorphisms of $\Z$-torsors for lattices $E \supset E'$ and $i=1$, $i=2$:
$$
\Dim(E_i/E) \otimes_{\Z} \Dim(E/E')  \lrto \Dim(E_i/E') \, \mbox{,}
$$
so that  the transition maps in this direct limit are given as
$$
f \longmapsto f' \, \mbox{,}   \qquad f'(a \otimes c)= f'(a) \otimes c  \, \mbox{.}
$$

 Obviously, for any lattices $E_1, E_2, E_3$  we have  canonical isomorphism of $\Z$-torsors
 \begin{equation}  \label{dim-centr}
\Dim(E_1  \mid E_2) \otimes_{\Z} \Dim(E_2  \mid E_3) \lrto \Dim(E_1  \mid E_3)
\end{equation}
which satisfies the associativity diagram for four lattices.

It is not difficult to see that for any $g \in GL_n(\da_X)$ and any lattice $E$,
the $k$-vector subspace $g E$ is again a lattice. For any lattices $E_1, E_2$ we have an obvious isomorphism
$$
\Dim(E_1  \mid E_2 )  \lrto \Dim(g E_1   \mid g E_2 ) \, \mbox{,}
\qquad d \longmapsto g(d)
  \, \mbox{.}
$$

This all leads to the construction of the central extension
\begin{equation}  \label{1centr-ext}
0 \lrto \Z \lrto \widetilde{GL_n(\da_X)}  \stackrel{\Theta}{\lrto} GL_n(\da_X)  \lrto 1 \, \mbox{,}
\end{equation}
where $\widetilde{GL_n(\da_X)}$ consists of all pairs $(g, d)$, where $g \in GL_n(\da_X)$ and ${d \in \Dim(\da_{12}^n \mid g \da_{12}^n)}$.
The group operation and the map $\Theta$ are the following
\begin{equation}  \label{group law}
(g_1, d_1)(g_2, d_2)= (g_1 g_2, d_1 \otimes g_1(d_2)) \, \mbox{,} \qquad \Theta((g,d))=g  \, \mbox{.}
\end{equation}

\begin{nt} \label{Aut} \em
The above construction of the central extension $\widetilde{GL_n(\da_X)} $  is a special case of a more  general construction. Let $B$ be a $C_2$-space over $k$
(or, with a little bit more restrictions, a $2$-Tate vector space over $k$), see~\cite{Osip2}.  Then there is a canonical central extension
(see~\cite[\S~5.5, Remark~15]{OsipPar1})
$$
0 \lrto \Z \lrto \widetilde{\Aut\nolimits_{C_2}(B)_E}  \lrto \Aut\nolimits_{C_2}(B)  \lrto 1  \, \mbox{,}
$$
which depends on the choice of a lattice $E \subset B$, and where $\Aut_{C_2}(B)$ is the automorphism group of $B$ as an object of the category of $C_2$-spaces.
Now, $\da_X^n$ is a $C_2$-space over $k$ and there is an embedding of groups  $GL_n(\da_X)  \subset \Aut_{C_2}(\da_X^n)$, see~\cite{Osip2}. Then the central extension $\widetilde{GL_n(\da_X)} $ is the restriction of the central extension  $\widetilde{\Aut\nolimits_{C_2}(B)_E}$ under the last embedding of groups when $B$
is $\da_X^n$ and
${E}$ is~$ {\da_{12}^n}$.
\end{nt}

\subsection{A second central extension}
\label{2-ext}

From central extension~\eqref{1centr-ext} we will obtain another central extension (see also more in~\cite[\S~3.1]{Osip3}). We note that
$$
GL_n(\da_X) = SL_n(\da_X)  \rtimes \da_X^* \, \mbox{,}
$$
where the group of invertible elements  $\da_X^*$ of the ring $\da_X$ acts on $SL_n(\da_X)$ by conjugations, i.e. by inner automorphisms $h \mapsto aha^{-1}$, where
 $h \in SL_n(\da_X)$ and
 we embed $\da_X^*$ into $GL_n(\da_X)$ as   $a \mapsto \diag(a,1, \ldots, 1)$.
Now we define
$$
\widehat{GL_n(\da_X)} = \Theta^{-1}(SL_n(\da_X)) \rtimes \da_X^*  \, \mbox{,}
$$
where $\da_X^*$ acts on $\Theta^{-1}(SL_n(\da_X))$ by inner automorphisms in the group $\widetilde{GL_n(\da_X)}$ via lifting of elements from $\da_X^*$ to $\widetilde{GL_n(\da_X)}$, i.e.
$
a(g)= a' g a'^{-1} $, where $ g \in \Theta^{-1}(SL_n(\da_X))$ and $a' \in \widetilde{GL_n(\da_X)}$
is any element  such that $\Theta(a')= \diag(a,1, \ldots, 1)$.

Clearly, we obtain the central extension:
\begin{equation}  \label{2centr-ext}
0 \lrto \Z \lrto \widehat{GL_n(\da_X)}  {\lrto} GL_n(\da_X)  \lrto 1 \, \mbox{.}
\end{equation}
This central extension restricted to the subgroup $SL_n(\da_X)  \subset GL_n(\da_X)$ coincides
with the central extension  $\widetilde{GL_n(\da_X)}$ restricted to this subgroup. Besides, by construction, central extension~\eqref{2centr-ext}
canonically splits over the subgroup $\da_X^*$.

\begin{nt}
\label{group-cohom}
\em
We explain what  the transition from central extension~\eqref{1centr-ext} to central extension~\eqref{2centr-ext}  means
in terms of group  cohomology.

Recall (see~\cite[1.7. Construction]{BD}) that  a central extension $\hat{C}$ of a group $C = G \rtimes H$
by  a group $A$ is equivalent to the following data:
\begin{itemize}
\item[1)]   a central extension of $H$ by $A$;
\item[2)]   a central extension
$$
1 \lrto A \lrto \hat{G} \lrto G \lrto 1 \, \mbox{;}
$$
\item[3)] an action of the group $H$ on the group $\hat{G}$  such that this action  lifts the action of $H$ on $G$ and is trivial on $A$.
\end{itemize}
And an isomorphism of central extensions corresponds to an isomorphism of data.

Using this description and that $H$ is a subgroup in $C$ and $H$ is a quotient group of $C$,
we obtain that
\begin{equation}  \label{transition}
H^2(C, A) = H^2(H,A)  \oplus T   \, \mbox{,}
\end{equation}
and that  there is an exact sequence
\begin{equation}  \label{trans}
0 \lrto H^1(H, \Hom(G,A) ) \lrto T \stackrel{\psi}{\lrto}  H^2(G,A)^H  \stackrel{\phi}{\lrto} H^2(H, \Hom(G,A))   \, \mbox{,}
\end{equation}
which  we will now explain. (Here the actions of $H$  on $\Hom(G,A)$ and on $H^2(G,A)$ are obtained in the usual way from  the action on $G$.)

We will use that for any central extension
\begin{equation}  \label{ce}
1 \lrto A \lrto \hat{G} \lrto G \lrto 1
\end{equation}
the group $\Hom(G,A)$ is canonically isomorphic  to the automorphism group of central extension~\eqref{ce}, i.e. it is  isomorphic to the group of automorphisms of the group $\hat{G}$ such that these automorphisms induce  identically action on the subgroup $A$ and on the quotient group $G$.

We consider any central extension $I$ from $H^2(G,A)^H$.
 We have a group $L(I)$ which consists of lifts of the actions of elements of   $H$
 to actions on the group corresponding to the central extension  $I$,
  with the trivial action on the subgroup $A$. The group $L(I)$
  is an extension (in general, non-central) of the group  $H$
  by the automorphism group  $\Hom(G,A)$ of the central extension  $I$.
  The isomorphism class of this non-central extension is  $\phi(I)$. If $\phi(I) =0 $,
  then the action of the whole group   $H$
  can be lifted to an action on the group corresponding to the central extension  $I$,
  with the trivial action on the subgroup $A$.

An element $t$ from $T$ is an isomorphism class of the following data: a central extension $\psi(t)$ together with a lift of the action of the group $H$ to an action on the group  corresponding to  this central extension, with the trivial action on the subgroup~$A$.

Correspondingly, for any cental extension $I$ from $\Ker \phi$  we have that $\psi^{-1}(I)$ is the set of
equivalence classes of group sections $s$ of the natural homomorphism $L(I)  \to H$.
 Group sections $s$ and $s'$ are equivalent if  $s = \sigma_a s'$, where $\sigma_a$ is an inner automorphism of the group  $L(I)$, given by an element  $a$ from the automorphism group  $\Hom(G,A)$  of the central extension  $I$.
  Hence we obtain that   $\psi^{-1}(I)$ is an $H^1(H, \Hom(G,A))$-torsor.

We note that another, computational construction and proof of exact sequence~\eqref{trans} and its continuation  to the right  was given in~\cite{Ta}.

\bigskip

Now the transition from central extension~\eqref{1centr-ext} to central extension~\eqref{2centr-ext} is the projection from $H^2(C, A)$
to $T$ in~\eqref{transition}.

We note that if a group $G$ is perfect, i.e. $[G,G]=G$, then $\Hom(G,A)=0$. Therefore, in this case  we obtain from~\eqref{transition}-\eqref{trans}
that
$$
H^2(C,A)= H^2(H,A) \oplus H^2(G,A)^H   \, \mbox{.}
$$
This condition  is satisfied, for example,  when $C =GL_n(F)$, $H= F^*$, $G = SL_n(F)$,
where $F$ is an infinite field (for example, $F$ is an $n$-dimensional local field). Then we have (cf.~\cite[\S~2.2]{Osip2'})
$$
H^2(SL_n(F), A)^{F^*}= \Hom(H_2(SL_n(F), \Z)_{F^*}, A)= \Hom(K_2(F), A)  \mbox{.}
$$
\end{nt}

\section{Specially constructed elements of $\Z$-torsors}
\label{special}

For an irreducible curve $C$ on $X$ let a field $K_C$ be the completion of the field $k(X)$ of rational functions on $X$ with respect to the discrete valuation given by $C$.

For a point $x$ of $X$ let $K_x= k(X) \cdot \hat{\oo}_{x,X}$ be a subring
of the fraction field $\Frac \hat{\oo}_{x,X}$, where $\hat{\oo}_{x,X}$
is the completion of the local ring of $x$ on $X$.

We have natural diagonal embeddings:
\begin{equation}  \label{diag-emb}
\prod_{C \subset X} K_C  \hookrightarrow \prod_{x \in C} K_{x,C}   \qquad \mbox{and} \qquad
\prod_{x \in X}  K_x  \hookrightarrow  \prod_{x \in C} K_{x,C} \, \mbox{.}
\end{equation}

There are the following subrings of the adelic ring $\da_X$:

$$
\da_{01} = \left(   \prod_{C \subset X} K_C  \right)   \cap \da_X    \qquad  \mbox{and}  \qquad
\da_{02} = \left(   \prod_{x \subset X} K_x  \right)   \cap \da_X   \, \mbox{,}
$$
where the intersection is taken inside the ring $\prod_{x \in C} K_{x,C}$.

Let $n \ge 1$ be an integer.

For lattices $E_1 \subset E_2$ of $\da_X^n$ (see Definition~\ref{lattice}) we define a $k$-vector  subspace
$$
\tilde{\mu}_{E_1, E_2} = (E_2 \cap \da_{02}^n)/ (E_1 \cap \da_{02}^n)  \subset E_2/ E_1  \, \mbox{.}
$$
It is easy to see that for any divisors $D_1 \le D_2$ on $X$
the $k$-vector subspace
$${ (A_{12}(D_2) \cap \da_{02}^n)/ (A_{12}(D_1) \cap \da_{02}^n)  }$$
is an open linearly compact $k$-vector subspace of $\da_{12}(D_2)/ \da_{12}(D_1)$.
Hence $\tilde{\mu}_{E_1, E_2}$
 is an open linearly compact $k$-vector subspace of $E_2/E_1$.

Now for arbitrary lattices  $E_1$ and $E_2$ of $\da_X^n$ we introduce an element
$$\mu_{E_1, E_2}   \in \Dim(E_1 \mid E_2)$$
uniquely defined by the following two rules:
\begin{enumerate}
\item If $E_1 \subset E_2$, then $\mu_{E_1, E_2}$ is a ``dimension theory'' that equals $0$ on a $k$-vector subspace $\tilde{\mu}_{E_1, E_2}  \subset E_2/E_1$.
\item   For arbitrary lattices $F_1, F_2, F_3$ of $\da_X^n$ we have
$$
\mu_{F_1, F_2} \otimes \mu_{F_2, F_3}  = \mu_{F_1, F_3}
$$
with respect to  isomorphism~\eqref{dim-centr}.
\end{enumerate}

\begin{nt} \em
To construct an element $\mu_{E_1, E_2}  \subset \Dim(E_1 \mid E_2)$ it is not important that $X$ is a projective surface.
\end{nt}

Analogously, for lattices $E_1 \subset E_2$  of $\da_X^n$ we define a $k$-vector subspace
$$
\tilde{\nu}_{E_1, E_2} = (E_2 \cap \da_{01}^n)/ (E_1 \cap \da_{01}^n)  \subset E_2/ E_1  \, \mbox{.}
$$
The $k$-vector subspace $\tilde{\nu}_{E_1, E_2}  \subset E_2/E_1$ is a discrete subspace such that the quotient space of $E_2/E_1$ by this subspace
is a linearly compact space.  (This fact is first easy to see for $E_1 = \da_{12}(D_1)$, $E_2 = \da_{12}(D_2)$, where $D_1$ and $D_2$ are divisors on $X$. The latter follows, since for any projective curve $C$ on $X$ the field of rational functions $k(C)$ is a discrete subspace in the adelic space of $C$ and the quotient space of this adelic space by $k(C)$ is a linearly compact $k$-vector space, and this follows, for example,  from adelic complex of $C$ and that the cohomology spaces of coherent sheaves on $C$ are finite-dimensional $k$-vector spaces.)

Now for arbitrary lattices  $E_1$ and $E_2$ of $\da_X^n$ we introduce an element
$$\nu_{E_1, E_2}   \in \Dim(E_1 \mid E_2)$$
uniquely defined by the following two rules.
\begin{enumerate}
\item If $E_1 \subset E_2$, then $\nu_{E_1, E_2}  \in \Dim(E_2/ E_1)$
is defined from exact sequence
$$
0 \lrto   \tilde{\nu}_{E_1, E_2}
  \lrto E_2/ E_1  \lrto (E_2/E_1)/ \tilde{\nu}_{E_1, E_2}  \lrto 0
$$
where the first non-zero term is a discrete space and the last non-zero term is a linearly compact space, and by~\eqref{dim-centr} there is a canonical isomorphism
$$
 \Dim(\tilde{\nu}_{E_1, E_2}) \otimes_{\Z} \Dim(  (E_2/E_1)/  \tilde{\nu}_{E_1, E_2} ) \lrto \Dim(E_2/ E_1) \, \mbox{,}
$$
now $\nu_{E_1, E_2} = \nu_1 \otimes \nu_2$, where $\nu_1 \in \Dim( \tilde{\nu}_{E_1, E_2}  )$  is the ``dimension theory'' that equals $0$
on the zero-subspace of the discrete space $\tilde{\nu}_{E_1, E_2}$ and $\nu_2 \in  \Dim(  (E_2/E_1)/ \tilde{\nu}_{E_1, E_2}  )$
is the ``dimension theory'' that equals $0$ on the whole linearly compact $k$-vector space $(E_2/E_1)/ \tilde{\nu}_{E_1, E_2}$.
\item   For arbitrary lattices $F_1, F_2, F_3$ of $\da_X^n$ we have
$$
\nu_{F_1, F_2} \otimes \nu_{F_2, F_3}  = \nu_{F_1, F_3}
$$
with respect to  isomorphism~\eqref{dim-centr}.
\end{enumerate}

\begin{nt} \em
To construct an element $\nu_{E_1, E_2}  \subset \Dim(E_1 \mid E_2)$ it is  important that $X$ is a projective surface.
\end{nt}

\medskip

We note that any rank $n$ locally free subsheaf of $\oo_X$-modules $\E  \subset k(X)^n$  on $X$ gives a {\em lattice}  $\da_{12}(\E)  \subset  \da_X^n$
(see~\cite[\S~2.2.3]{Osip2}) which generalizes
the case $\da_{12} (\oo_X^n)=\da_{12}^n$
and the case $\da_{12}(\oo_X(D))=\da_{12}(D)$ for $n=1$, where $D$ is a divisor on $X$.

\begin{prop}  \label{Euler}
For any rank $n$ locally free subsheaves of $\oo_X$-modules $\f  $ and  $\g $ of the constant sheaf $k(X)^n$ on $X$ we have
\begin{equation}  \label{Ech}
\nu_{\da_{12}(\f), \da_{12}(\g)}  - \mu_{\da_{12}(\f), \da_{12}(\g)} = \chi(\g) - \chi(\f)  \, \mbox{,}
\end{equation}
where the substraction in the left hand side of the formula makes sense, because it is applied to the elements of the $\Z$-torsor
$ \Dim(\da_{12}(\f) \mid \da_{12}(\g))  $, and for any Zariski sheaf $\E$ on $X$
$$
\chi(\E) = H^0(X, \E)  - H^1(X, \E)
$$
is its Euler characteristic on $X$.
\end{prop}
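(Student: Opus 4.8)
The plan is to reduce to the nested case $\f\subset\g$, then to evaluate the left-hand side of \eqref{Ech} explicitly by unwinding the definitions of $\mu$ and $\nu$, and finally to recognize the resulting integer as the Euler characteristic of the torsion sheaf $\g/\f$.

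First I would observe that both sides of \eqref{Ech} are additive along chains of lattices. Writing $\delta(E_1,E_2)=\nu_{E_1,E_2}-\mu_{E_1,E_2}\in\Z$ for arbitrary lattices, the cocycle rule~2 for $\mu$ and for $\nu$, together with the composition isomorphism \eqref{dim-centr}, gives $\delta(E_1,E_3)=\delta(E_1,E_2)+\delta(E_2,E_3)$, since under a tensor product of $\Z$-torsors the integer differences of paired elements add. The right-hand side $\chi(\g)-\chi(\f)$ is additive for the same reason. Hence, choosing an effective divisor $D$ large enough that the locally free sheaf $\oo_X(-D)^n$ is contained in both $\f$ and $\g$ inside $k(X)^n$, and pivoting through the lattice $\da_{12}(\oo_X(-D)^n)$, I reduce to a nested pair of locally free subsheaves, which I denote $\f\subset\g$, with $E_1=\da_{12}(\f)\subset E_2=\da_{12}(\g)$.

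Next I would make the left-hand side explicit. Since $\mu_{E_1,E_2}$ is by definition the dimension theory vanishing on $\tilde{\mu}_{E_1,E_2}$, the integer $\nu_{E_1,E_2}-\mu_{E_1,E_2}$ equals $\nu_{E_1,E_2}(\tilde{\mu}_{E_1,E_2})$. Evaluating this using the decomposition of $\nu_{E_1,E_2}$ into its discrete and linearly compact parts (the defining exact sequence $0\to\tilde{\nu}_{E_1,E_2}\to E_2/E_1\to (E_2/E_1)/\tilde{\nu}_{E_1,E_2}\to 0$) and the isomorphism \eqref{dim-iso}, and using that $\tilde{\mu}_{E_1,E_2}$ is open linearly compact while $\tilde{\nu}_{E_1,E_2}$ is discrete with linearly compact quotient, I obtain
\[
\nu_{E_1,E_2}-\mu_{E_1,E_2}=\dim_k\big(\tilde{\mu}_{E_1,E_2}\cap\tilde{\nu}_{E_1,E_2}\big)-\dim_k\frac{E_2/E_1}{\tilde{\mu}_{E_1,E_2}+\tilde{\nu}_{E_1,E_2}},
\]
both dimensions being finite: the first subspace is the intersection of an open linearly compact subspace with a discrete one, and $\tilde{\mu}_{E_1,E_2}+\tilde{\nu}_{E_1,E_2}$ has finite codimension in $E_2/E_1$ because $(E_2/E_1)/\tilde{\nu}_{E_1,E_2}$ is linearly compact and $\tilde{\mu}_{E_1,E_2}$ maps onto an open subspace of it.

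Finally, the key point is that the triple $(E_2/E_1;\,\tilde{\nu}_{E_1,E_2},\,\tilde{\mu}_{E_1,E_2})$ is the adelic description of the coherent torsion sheaf $\mathcal{Q}=\g/\f$, whose support has dimension $\le 1$: here $E_2/E_1=\da_{12}(\g)/\da_{12}(\f)$ is the space of integral adeles of $\mathcal{Q}$, the discrete subspace $\tilde{\nu}_{E_1,E_2}$ comes from the curve adeles $\da_{01}^n$ and the open compact subspace $\tilde{\mu}_{E_1,E_2}$ from the point adeles $\da_{02}^n$, so that the two-term complex $\tilde{\nu}_{E_1,E_2}\oplus\tilde{\mu}_{E_1,E_2}\to E_2/E_1$, $(a,b)\mapsto a-b$, is the adelic complex computing $H^\ast(X,\mathcal{Q})$. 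Thus $\tilde{\mu}_{E_1,E_2}\cap\tilde{\nu}_{E_1,E_2}=H^0(X,\mathcal{Q})$ and $(E_2/E_1)/(\tilde{\mu}_{E_1,E_2}+\tilde{\nu}_{E_1,E_2})=H^1(X,\mathcal{Q})$ (there is no $H^2$, since $\dim\Supp\mathcal{Q}\le 1$), so the displayed integer equals $\chi(\mathcal{Q})$; additivity of $\chi$ on $0\to\f\to\g\to\mathcal{Q}\to 0$ then gives $\chi(\g)-\chi(\f)$, proving \eqref{Ech}. The main obstacle is precisely this last identification, i.e.\ verifying rigorously that $E_2/E_1$ with its two distinguished subspaces is the adelic complex of $\mathcal{Q}$ and computes $H^0$ and $H^1$; I would settle it through the general properties of adelic complexes of coherent sheaves on $X$ (cf.~\cite{Osip2}). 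Alternatively one can bypass the general identification by checking that the functional on the right of the displayed formula is additive on short exact sequences of torsion sheaves (via the snake lemma applied to the two-term complexes, using their topological exactness) and computing it directly on elementary modifications, where the curve-direction reduces to the abstract Riemann--Roch theorem \eqref{ARR}.
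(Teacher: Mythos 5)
Your argument is correct and follows the paper's own proof in all essentials: reduce to the nested case $\f\subset\g$ by additivity of both sides, then identify the two-term complex $\tilde{\nu}_{E_1,E_2}\oplus\tilde{\mu}_{E_1,E_2}\to E_2/E_1$ with the adelic complex of the torsion quotient (equivalently, with the quotient of the adelic complexes of $\f$ and $\g$, which is what the paper writes) and take Euler characteristics. Your explicit unwinding $\nu_{E_1,E_2}-\mu_{E_1,E_2}=\dim_k(\tilde{\mu}_{E_1,E_2}\cap\tilde{\nu}_{E_1,E_2})-\dim_k\bigl((E_2/E_1)/(\tilde{\mu}_{E_1,E_2}+\tilde{\nu}_{E_1,E_2})\bigr)$ is a correct rendering of the definitions that the paper leaves implicit.
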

\begin{proof}
From the properties of the left hand side and the right hand side of formula~\eqref{Ech} we see that it is enough to suppose that
$\f \subset \g$.
For any quasicoherent sheaf $\E$ of $\oo_X$-modules on $X$ there is the adelic complex $\ad_X(\E)$ which has  non-zero terms only in degrees $0,1,2$
and $H^i(X, \E)= H^i(\ad_X(\E))$ (see, e.g.,~\cite{Osip1}).

We have canonical embedding  of $\ad_X(\f)$ to $\ad_X(\g)$ such that the quotient complex looks as following:
\begin{gather*}
\tilde{\nu}_{\da_{12}(\f), \da_{12}(\g)}  \oplus \tilde{\mu}_{\da_{12}(\f), \da_{12}(\g)}  \lrto \da_{12}(\g)/ \da_{12}(\f) \\
x \oplus y   \longmapsto x +y \, \mbox{.}
\end{gather*}

Hence we obtain formula~\eqref{Ech}, since the Euler characteristic of the latest complex equals to $\chi(\ad(\g) ) - \chi(\ad(\f))$.

\end{proof}

\section{Intersection index of divisors}  \label{intersect}

Now we recall how it is possible to obtain the intersection index of divisors on $X$ by means of central extension~\eqref{1centr-ext}  when $n=1$:
$$
0 \lrto \Z \lrto \widetilde{GL_1(\da_X)}  \stackrel{\Theta}{\lrto} \da_X^*  \lrto 1 \, \mbox{.}
$$

We consider a bimultiplicative and anti-symmetric map
$$
\langle \cdot , \cdot \rangle \; :  \;  \da_X^*  \times \da_X^*  \lrto \Z
$$
given for any $x, y \in \da_X^*$ as following
$$
\langle x, y \rangle = [x', y']= x' y' {x'}^{-1} {y'}^{-1}  \, \mbox{,}
$$
where $x', y' \in \widetilde{GL_1(\da_X)}$ are any elements such that $\Theta(x')=x$ and $\Theta(y')=y$. This definition does not depend on the choice of $x'$ and $y'$.

We consider a divisor $D$ on $X$.

For any irreducible curve $C$ on $X$ let $j_C^D \in K_C^*$ be a local equation  of the restriction of $D$ to $\Spec \oo_{K_C}$ under the natural morphism
$\Spec \oo_{K_C} \to X$, where $\oo_{K_C}$ is the discrete valuation ring of $K_C$.  Note that $j_C^D \cdot v$, where $v \in \oo_{K_C}^*$, is  again such a local equation.
Then under the first diagonal embedding from~\eqref{diag-emb} we obtain that the collection $j_{1,D} = \prod\limits_C j_C^D$, where $C$ runs over all irreducible curves $C$ of $X$, belongs to $\da_{01}^*$.

For any point $x$ on $X$
let $j_x^D \in K_x^*$ be a local equation  of the restriction of $D$ to $\Spec \hat{\oo}_{x,X}$ under the natural morphism
$\Spec \hat{\oo}_{x,X} \to X$. Note that $j_x^D \cdot w$, where $w \in \hat{\oo}_{x,X}^*$, is  again such a local equation.
Then under the second diagonal embedding from~\eqref{diag-emb} we obtain that the collection $j_{2,D}= \prod\limits_x j_x^D$, where $x$ runs over all points $x$ of $X$, belongs to $\da_{02}^*$.

For any divisors $S$ and $T$ on $X$ we have (see~\cite[Proposition~2]{Osip3} which is based on~\cite[\S~2.2]{Par2}) that
\begin{equation}  \label{inter-index}
\langle j_{2, S} ,   j_{1,T}   \rangle  = - (S,T)  \, \mbox{,}
\end{equation}
where $(S,T)  \in \Z$ is the intersection index of $S$ and $T$ on $X$.

Now we give another presentation for the intersection index of divisors based on formula~\eqref{inter-index} and special lifts of elements as in Section~\ref{special}.

\begin{prop}
\label{prop-inter-index}
For any divisors $S$ and $T$ on $X$ we have
$$
(S, T)  = (\mu_{\da_{12}, \da_{12}(-T)} \otimes \nu_{\da_{12}(-T), \da_{12}(-T -S)})  - (\nu_{\da_{12}, \da_{12}(-S)}  \otimes
 \mu_{\da_{12}(-S)  ,\da_{12}(-S-T)})  =
$$
$$
=  (\mu_{\da_{12}, \da_{12}(T)} \otimes \nu_{\da_{12}(T), \da_{12}(T +S)})  - (\nu_{\da_{12}, \da_{12}(S)}  \otimes
 \mu_{\da_{12}(S)  ,\da_{12}(S+T)}) \, \mbox{,}
$$
where the substraction in these formulas makes sense, because it is appllied to elements of the $\Z$-torsor
$$ \Dim(\da_{12} \mid \da_{12}(-S -T)) \qquad  \mbox{or} \qquad \Dim(\da_{12}  \mid \da_{12}(S +T))$$
 using isomorphism~\eqref{dim-centr}.
\end{prop}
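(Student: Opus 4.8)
\emph{Strategy.} The plan is to read off the two sides of the asserted identity as the second components of the two orderings of a single product of two lifts in $\widetilde{GL_1(\da_X)}$, and to identify their difference with the commutator pairing $\langle\,\cdot\,,\,\cdot\,\rangle$. The decisive point is that the commutator $[x',y']=x'y'x'^{-1}y'^{-1}$ of two lifts lands in the central $\Z$ and is independent of the chosen lifts. This freedom lets me lift $j_{1,T}\in\da_{01}^*$ by the ``point'' dimension theory $\mu$ and $j_{2,S}\in\da_{02}^*$ by the ``curve'' dimension theory $\nu$, i.e. deliberately against the natural pairing of each element with its own subring; it is exactly this crossing that makes the two mixed tensor products $\mu\otimes\nu$ and $\nu\otimes\mu$ of the statement appear.

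\emph{Preliminary inputs.} Since the $j_C^T$ and $j_x^S$ are local equations of $T$ and $S$, multiplication by $j_{1,T}$ carries each $\da_{12}(D)$ to $\da_{12}(D-T)$ and multiplication by $j_{2,S}$ carries $\da_{12}(D)$ to $\da_{12}(D-S)$; in particular $j_{1,T}\da_{12}=\da_{12}(-T)$, $j_{2,S}\da_{12}=\da_{12}(-S)$, $j_{1,T}\da_{12}(-S)=\da_{12}(-S-T)$ and $j_{2,S}\da_{12}(-T)=\da_{12}(-S-T)$. Next I establish the two invariances that drive the computation. For $g\in\da_{01}^*$ one has $g\da_{01}=\da_{01}$, whence the action $d\mapsto g(d)$ carries $\tilde\nu_{E_1,E_2}$ to $\tilde\nu_{gE_1,gE_2}$, and therefore $g(\nu_{E_1,E_2})=\nu_{gE_1,gE_2}$; symmetrically, for $g\in\da_{02}^*$ one has $g\da_{02}=\da_{02}$, the action carries $\tilde\mu_{E_1,E_2}$ to $\tilde\mu_{gE_1,gE_2}$, and so $g(\mu_{E_1,E_2})=\mu_{gE_1,gE_2}$. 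Both follow at once from the defining rules of $\mu$ and $\nu$ in Section~\ref{special}. Applying the first invariance to $j_{1,T}\in\da_{01}^*$ and the second to $j_{2,S}\in\da_{02}^*$ yields
\[
j_{1,T}\bigl(\nu_{\da_{12},\da_{12}(-S)}\bigr)=\nu_{\da_{12}(-T),\da_{12}(-S-T)},\qquad
j_{2,S}\bigl(\mu_{\da_{12},\da_{12}(-T)}\bigr)=\mu_{\da_{12}(-S),\da_{12}(-S-T)}.
\]

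\emph{The computation.} Set $x'=(j_{1,T},\,\mu_{\da_{12},\da_{12}(-T)})$ and $y'=(j_{2,S},\,\nu_{\da_{12},\da_{12}(-S)})$ in $\widetilde{GL_1(\da_X)}$. Using the group law~\eqref{group law} together with the two identities above and the composition isomorphism~\eqref{dim-centr}, the second components of $x'y'$ and of $y'x'$ are exactly
\[
\mu_{\da_{12},\da_{12}(-T)}\otimes\nu_{\da_{12}(-T),\da_{12}(-S-T)}\qquad\text{and}\qquad
\nu_{\da_{12},\da_{12}(-S)}\otimes\mu_{\da_{12}(-S),\da_{12}(-S-T)},
\]
both lying in $\Dim(\da_{12}\mid\da_{12}(-S-T))$. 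Since $\da_X^*$ is commutative, $x'y'$ and $y'x'$ lie over the same element of $\da_X^*$, so they differ precisely by the central element $[x',y']=\langle j_{1,T},j_{2,S}\rangle$ acting additively on the torsor. Hence the first product minus the second equals $\langle j_{1,T},j_{2,S}\rangle$, which by antisymmetry of the pairing and by~\eqref{inter-index} equals $-\langle j_{2,S},j_{1,T}\rangle=(S,T)$. This is the first displayed equality; the second is obtained by applying the first to the divisors $-S$ and $-T$ and using $(-S,-T)=(S,T)$.

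\emph{Main obstacle.} The conceptual heart is the use of lift-independence to choose the crossed lifts, after which everything becomes bookkeeping; the genuinely delicate part is the sign and torsor accounting, namely checking that the additive action of the central element $[x',y']$ on $\Dim(\da_{12}\mid\da_{12}(-S-T))$ matches the subtraction written in the statement, and tracking through~\eqref{dim-centr} which copy of $\Z$ the commutator occupies. The invariance identities for $\mu$ and $\nu$ are routine once the actions on $\tilde\mu$ and $\tilde\nu$ are identified, but they are indispensable: without them the two mixed products would fail to be the intrinsic elements $\nu_{\da_{12}(-T),\da_{12}(-S-T)}$ and $\mu_{\da_{12}(-S),\da_{12}(-S-T)}$ named in the proposition.
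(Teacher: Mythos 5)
Your proof is correct and follows essentially the same route as the paper's: the same crossed lifts $(j_{1,T},\mu_{\da_{12},\da_{12}(-T)})$ and $(j_{2,S},\nu_{\da_{12},\da_{12}(-S)})$, the same invariance identities identifying $j_{1,T}(\nu)$ and $j_{2,S}(\mu)$, the same reading of the commutator via formula~\eqref{inter-index}, and the same reduction of the second displayed equality to the first via $(S,T)=(-S,-T)$. No gaps.
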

\begin{proof}
The second equality follows from the first equality, because $(S, T)= (-S, -T)$.

We prove the first equality. We note that for any divisor $D$ on $X$ we have ${ j_{1,D } \cdot   j_{2,D }^{-1}  \in \da_{12}^* }$ and equality of $k$-vector subspaces in $\da_X$:
$$
 j_{1,D } \da_{12} =   j_{2,D }  \da_{12}  = \da_{12}(-D)  \, \mbox{.}
$$
Therefore we can take the following special lifts of elements $j_{1,T}$ and $j_{2,S}$ to $\widetilde{GL_1(\da_X)}$
to calculate $\langle  j_{1,T} , j_{2,S} \rangle$:
\begin{equation}   \label{lift}
 j_{1,T  }  \longmapsto (  j_{1,T  } , \mu_{\da_{12}, \da_{12}(-T)} )  \, \mbox{,}  \qquad
 j_{2,S   }  \longmapsto   (  j_{2,S  } , \nu_{\da_{12}, \da_{12}(-S)} )  \, \mbox{.}
\end{equation}

Immediately from constructions we have that
\begin{gather}  \label{spaces}
j_{1,T}    (\nu_{\da_{12}, \da_{12}(-S)} )  = \nu_{\da_{12}(-T), \da_{12}(-T-S)}
\quad
\mbox{and} \\
\label{spaces1}
j_{2,S  }  (\mu_{\da_{12}, \da_{12}(-T)} )  = \mu_{\da_{12}(-S), \da_{12}(-T-S)}  \, \mbox{.}
\end{gather}

Now from  formula~\eqref{inter-index}  we have the following equality in the group $\widetilde{GL_1(\da_X)}$:
$$
(  j_{1,T  } , \mu_{\da_{12}, \da_{12}(-T)} )(  j_{2,S  } , \nu_{\da_{12}, \da_{12}(-S)} )
= (  j_{2,S  } , \nu_{\da_{12}, \da_{12}(-S)} )(  j_{1,T  } , \mu_{\da_{12}, \da_{12}(-T)} ) (S,T)  \, \mbox{,}
$$
where we consider the intersection index $(S, T )$ as an element of the central subgroup  ${\Z \subset \widetilde{GL_1(\da_X)}}$.
Now, using definition of the group operation in $\widetilde{GL_1(\da_X)}$ and formulas~\eqref{spaces}-\eqref{spaces1}, we obtain the statement of the proposition.

\end{proof}

\begin{nt} \em
Analogous to~\eqref{lift}  lift of elements was considered in~\cite[\S~5]{OP} for divisors $S$ and $T = (\omega) -  S$, where $\omega \in \Omega^2_{k(X)/k}$.
\end{nt}

\section{Canonical splittings}
\label{splittings}

We construct  canonical splittings of central extensions~\eqref{1centr-ext} and~\eqref{2centr-ext}     over some subgroups of $GL_n(\da_X)$ and investigate the properties of these splittings.

We consider central extension~\eqref{1centr-ext}.
\begin{prop}  \label{split}
We consider central extension~\eqref{1centr-ext}.
\begin{enumerate}
\item  \label{it1} The map
\begin{equation}  \label{spl12}
GL_n(\da_{12})  \ni g \longmapsto (g, 0)  \in \widetilde{GL_n(\da_X)}  \, \mbox{,}
\end{equation}
where $0$ is the zero ``dimension theory'',
gives a splitting of this central extension over the subgroup $GL_n(\da_{12})$.
\item   \label{it2}  The map
\begin{equation}  \label{spl02}
GL_n(\da_{02})  \ni g \longmapsto (g, \mu_{\da_{12}^n, g \da_{12}^n})  \in \widetilde{GL_n(\da_X)}
\end{equation}
gives a splitting of this central extension over the subgroup $GL_n(\da_{02})$.
\item  \label{it3}  The map
\begin{equation}  \label{spl01}
GL_n(\da_{01})  \ni g \longmapsto (g, \nu_{\da_{12}^n, g \da_{12}^n})  \in \widetilde{GL_n(\da_X)}
\end{equation}
gives a splitting of this central extension over the subgroup $GL_n(\da_{01})$.
\item \label{it4} Splittings~\eqref{spl12} and~\eqref{spl01} coincide over the subgroup  $GL_n(\prod_C \oo_{K_C})$. Splittings~\eqref{spl12} and~\eqref{spl02}
coincide over the subgroup  $GL_n(\prod_x  \hat{\oo}_{x,X})$. Splittings~\eqref{spl01} and~\eqref{spl02}
coincide over the subgroup  $GL_n(k(X))$. (Here we use the diagonal embeddings of subgroups, see~\eqref{diag-emb}.)
\end{enumerate}
\end{prop}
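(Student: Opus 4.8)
The plan is to check, for each of the three maps, that it is a group homomorphism lifting the identity on $GL_n(\da_X)$; the relation $\Theta\circ s=\mathrm{id}$ is automatic since all three maps preserve the first coordinate, so the content is the homomorphism identity $s(g_1)s(g_2)=s(g_1g_2)$ computed with the group law \eqref{group law}. The technical heart is an equivariance property of the special elements: for $g\in GL_n(\da_{02})$ and arbitrary lattices $E_1,E_2$ one has $g(\mu_{E_1,E_2})=\mu_{gE_1,gE_2}$, and for $g\in GL_n(\da_{01})$ one has $g(\nu_{E_1,E_2})=\nu_{gE_1,gE_2}$. Both follow the same pattern. Since $\da_{02}$ (resp.\ $\da_{01}$) is a subring, such a $g$ satisfies $g\da_{02}^n=\da_{02}^n$ (resp.\ $g\da_{01}^n=\da_{01}^n$); hence for $E_1\subset E_2$ the defining subspaces transform correctly, e.g. $g(\tilde{\mu}_{E_1,E_2})=\tilde{\mu}_{gE_1,gE_2}$, because $gE_i\cap\da_{02}^n=g(E_i\cap\da_{02}^n)$. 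As $d\mapsto g(d)$ carries the dimension theory vanishing on $\tilde{\mu}_{E_1,E_2}$ to the one vanishing on $\tilde{\mu}_{gE_1,gE_2}$, the defining Rule~1 for $\mu_{gE_1,gE_2}$ is matched; since $g$ is compatible with the composition isomorphism \eqref{dim-centr}, Rule~2 is preserved as well, and the characterization by the two rules upgrades the inclusion case to arbitrary pairs. The identical argument with $\tilde{\nu}$ handles $\nu$.

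Granting this, item~\ref{it1} is immediate: for $g\in GL_n(\da_{12})$ one has $g\da_{12}^n=\da_{12}^n$, so the relevant torsor is $\Dim(\da_{12}^n\mid\da_{12}^n)\cong\Z$, the element $g$ fixes its canonical $0$, and \eqref{group law} gives $(g_1,0)(g_2,0)=(g_1g_2,\,0\otimes g_1(0))=(g_1g_2,0)$. For item~\ref{it2}, by \eqref{group law} the homomorphism property of \eqref{spl02} is the identity
$$\mu_{\da_{12}^n,\, g_1\da_{12}^n}\otimes g_1\bigl(\mu_{\da_{12}^n,\, g_2\da_{12}^n}\bigr)=\mu_{\da_{12}^n,\, g_1g_2\da_{12}^n}.$$
The equivariance lemma gives $g_1(\mu_{\da_{12}^n,\,g_2\da_{12}^n})=\mu_{g_1\da_{12}^n,\,g_1g_2\da_{12}^n}$, and then Rule~2 for $\mu$ collapses the left side to the right side. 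Item~\ref{it3} is identical with $\nu$ in place of $\mu$ and $GL_n(\da_{01})$ in place of $GL_n(\da_{02})$.

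For item~\ref{it4} I first record the diagonal containments $\prod_C\oo_{K_C}\subset\da_{12}\cap\da_{01}$, $\prod_x\hat{\oo}_{x,X}\subset\da_{12}\cap\da_{02}$ and $k(X)\subset\da_{01}\cap\da_{02}$ (using \eqref{diag-emb}), so that on each indicated subgroup both relevant splittings are defined. Over $GL_n(\prod_C\oo_{K_C})$ and over $GL_n(\prod_x\hat{\oo}_{x,X})$ one has $g\in GL_n(\da_{12})$, whence $g\da_{12}^n=\da_{12}^n$ and the value $\nu_{\da_{12}^n,\da_{12}^n}$ (resp.\ $\mu_{\da_{12}^n,\da_{12}^n}$) is the canonical $0$ by Rule~1 for equal lattices; thus \eqref{spl01} (resp.\ \eqref{spl02}) agrees with \eqref{spl12}. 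The only substantive coincidence is that of \eqref{spl01} and \eqref{spl02} over $GL_n(k(X))$: for $g\in GL_n(k(X))$ the lattice $g\da_{12}^n$ equals $\da_{12}(\E)$ for the rank $n$ locally free subsheaf $\E=g\oo_X^n\subset k(X)^n$, and multiplication by $g$ is an $\oo_X$-module isomorphism $\oo_X^n\xrightarrow{\sim}\E$, so $\chi(\E)=n\chi(\oo_X)=\chi(\oo_X^n)$. Applying Proposition~\ref{Euler} with $\f=\oo_X^n$ and $\g=\E$ yields $\nu_{\da_{12}^n,\,g\da_{12}^n}-\mu_{\da_{12}^n,\,g\da_{12}^n}=\chi(\E)-\chi(\oo_X^n)=0$, i.e. the two sections coincide.

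The main obstacle I anticipate is the bookkeeping in the equivariance lemmas for non-nested pairs of lattices: one must verify that the $GL_n(\da_X)$-action on the $\Z$-torsors is genuinely compatible with \eqref{dim-centr}, so that \emph{both} characterizing rules transport, rather than merely checking the inclusion case via $\tilde{\mu}$ and $\tilde{\nu}$. Everything else is either formal, the homomorphism identities reducing to multiplicativity (Rule~2) combined with equivariance, or already packaged in Proposition~\ref{Euler}, which supplies the one genuinely arithmetic input needed for the coincidence over $GL_n(k(X))$.
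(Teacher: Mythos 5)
Your proposal is correct and follows essentially the same route as the paper: items \ref{it2}--\ref{it3} rest on the equivariance $g\tilde{\mu}_{E_1,E_2}=\tilde{\mu}_{gE_1,gE_2}$ and $h\tilde{\nu}_{E_1,E_2}=\tilde{\nu}_{hE_1,hE_2}$, and the only nontrivial part of item \ref{it4}, the coincidence over $GL_n(k(X))$, is handled exactly as in the paper via Proposition~\ref{Euler} and $\chi(g\oo_X^n)=\chi(\oo_X^n)$. Your extra bookkeeping (the group-law computation via Rule~2 and the upgrade from nested to arbitrary lattice pairs) is just an expansion of what the paper leaves implicit.
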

\begin{nt} \em For the construction of splittings~\eqref{spl12}-\eqref{spl02} it is not important that $X$ is projective, but for the construction of splitting~\eqref{spl01} it is important.
\end{nt}
\begin{proof}

Item~\ref{it1} is evident, since $g \da_{12}^n = \da_{12}^n$  for any $g \in GL_n(\da_{12})$.

Item~\ref{it2} and~\ref{it3} follow from the equalities:
$$
g \tilde{\mu}_{E_1, E_2} = \tilde{\mu}_{g E_1, gE_2} \, \mbox{,}  \qquad h \tilde{\nu}_{E_1, E_2}= \tilde{\nu}_{hE_1, hE_2}  \, \mbox{,}
$$
where $E_1 \subset E_2$  are any lattices in $\da_X^n$, an element $g \in GL_n(\da_{02})$, an element ${h \in GL_n(\da_{01})}$.

In item~\ref{it4} the only non-evident statement is that splittings~\eqref{spl01} and~\eqref{spl02}
coincide over the subgroup  $GL_n(k(X))$. To prove this statement we note that  for any $g \in GL_n(k(X))$  we have
$ g \da_{12}^n= \da_{12}(g \oo_X^n) $ and
by Proposition~\ref{Euler}  we have
$$
\nu_{\da_{12}(\oo_X^n), \da_{12}(g \oo_X^n)}  - \mu_{\da_{12}(\oo_X^n), \da_{12}(g \oo_X^n)} = \chi(g\oo_X^n) - \chi( \oo_X^n)  \, \mbox{.}
$$
We note that the sheaf $g \oo_X^n$ is isomorphic to the sheaf $\oo_X^n$. Therefore we have ${\chi(\oo_X^n) = \chi(g \oo_X^n) }$. Hence
  $$\nu_{\da_{12}^n, g\da_{12}^n}  = \mu_{\da_{12}^n, g\da_{12}^n}$$
and two splittings coincide. (We note that we could apply Proposition~\ref{Euler} only for the case $n=1$, since
$GL_n(k(X)) = SL_n(k(X)) \rtimes k(X)^*$ and the group $SL_n(k(X))$ is perfect that implies that any two sections of any central extension of $SL_n(k(X))$ coincide.)

\end{proof}

Let $n = n_1 + n_2$, where $n_1$ and $n_2$ are positive integers. We consider a parabolic subgroup
$$
P_{n_1, n_2} = \left\{ \left(
\begin{matrix}
GL_{n_1}(\da_X)  & * \\
0 & GL_{n_2}(\da_X)
\end{matrix}
 \right) \right\}
\subset GL_n(\da_X)  \, \mbox{.}
$$
Let $p_i : GL_n(\da_X) \to P_{n_i}$, where $i=1$ or $i=2$, be natural homomorphisms.
\begin{prop}  \label{parabolic}
 The pullback of the central extension $\widetilde{GL_n(\da_X)}$  via the embedding ${P_{n_1, n_2}  \hookrightarrow GL_n(\da_X)}$
is isomorphic to the Baer sum of pullbacks $p_1^*(\widetilde{GL_{n_1}(\da_X)})$
and $p_2^*(\widetilde{GL_{n_2}(\da_X)}) $. Besides the splittings (over special subgroups) from Proposition~\ref{split}
are compatible with respect to this isomorphism.
\end{prop}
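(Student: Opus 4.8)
The plan is to realize the fiberwise identification of the pullback extension with the Baer sum through the block–upper–triangular filtration, and then to check that this identification respects the group laws and the three splittings. First I would fix the $P_{n_1,n_2}$-invariant filtration $0 \subset \da_X^{n_1} \subset \da_X^n$, where $\da_X^{n_1}$ denotes the subspace spanned by the first $n_1$ coordinates and $\da_X^n/\da_X^{n_1} \cong \da_X^{n_2}$ is spanned by the last $n_2$ coordinates. Every $g \in P_{n_1,n_2}$ preserves $\da_X^{n_1}$, acts on it as $p_1(g)$ and on the quotient $\da_X^{n_2}$ as $p_2(g)$. For a filtration-compatible lattice $E$ set $E' = E \cap \da_X^{n_1}$ and let $E''$ be the image of $E$ in $\da_X^{n_2}$; then $0 \to E' \to E \to E'' \to 0$ is exact, and for nested filtration-compatible lattices $E_1 \subset E_2$ the induced sequence $0 \to E_2'/E_1' \to E_2/E_1 \to E_2''/E_1'' \to 0$ is of type~\eqref{ex-seq}. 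Isomorphism~\eqref{dim-iso}, extended to the lattice setting via the limit defining $\Dim(E_1 \mid E_2)$, then yields a canonical isomorphism $\Dim(E_1 \mid E_2) \cong \Dim(E_1' \mid E_2') \otimes_{\Z} \Dim(E_1'' \mid E_2'')$. Since $\da_{12}^n = \da_{12}^{n_1} \oplus \da_{12}^{n_2}$ and, for $g \in P_{n_1,n_2}$, one computes $(g\da_{12}^n)' = p_1(g)\da_{12}^{n_1}$ and $(g\da_{12}^n)'' = p_2(g)\da_{12}^{n_2}$, the fiber over $g$ becomes
$$\Dim(\da_{12}^n \mid g\da_{12}^n) \cong \Dim(\da_{12}^{n_1} \mid p_1(g)\da_{12}^{n_1}) \otimes_{\Z} \Dim(\da_{12}^{n_2} \mid p_2(g)\da_{12}^{n_2}) \, \mbox{.}$$
As a tensor product of $\Z$-torsors is the torsor underlying a Baer sum, this is precisely the fiberwise identification of the pullback of $\widetilde{GL_n(\da_X)}$ to $P_{n_1,n_2}$ with the Baer sum of $p_1^*(\widetilde{GL_{n_1}(\da_X)})$ and $p_2^*(\widetilde{GL_{n_2}(\da_X)})$.

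Next I would verify that this fiberwise identification is an isomorphism of groups. Using group law~\eqref{group law}, for $g_1, g_2 \in P_{n_1,n_2}$ the product of $(g_1, d)$ and $(g_2, d')$ is $(g_1 g_2, d \otimes g_1(d'))$. Under the decomposition $d \leftrightarrow d_1 \otimes d_2$ and $d' \leftrightarrow d_1' \otimes d_2'$, one has to see that $g_1(d')$ decomposes as $p_1(g_1)(d_1') \otimes p_2(g_1)(d_2')$ — this holds because $g_1$ preserves the filtration and acts as $p_i(g_1)$ on the $i$-th graded piece — and that $d \otimes g_1(d')$ decomposes as $(d_1 \otimes p_1(g_1)(d_1')) \otimes (d_2 \otimes p_2(g_1)(d_2'))$, which matches the Baer-sum product once we use that the $p_i$ are homomorphisms, so $p_i(g_1)p_i(g_2) = p_i(g_1 g_2)$. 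The substance of this step — and the point I expect to be the main obstacle — is the naturality of the filtration isomorphism: that $\Dim(E_1 \mid E_2) \cong \Dim(E_1' \mid E_2') \otimes \Dim(E_1'' \mid E_2'')$ is compatible both with the composition isomorphism~\eqref{dim-centr} (so that the two-step decomposition of $\Dim(E_1 \mid E_3)$ agrees with the tensor of the decompositions of $\Dim(E_1 \mid E_2)$ and $\Dim(E_2 \mid E_3)$) and with the translation $d \mapsto g(d)$ for $g \in P_{n_1,n_2}$. Both are formal but require care because $\Dim(E_1 \mid E_2)$ is defined by a direct limit; I would reduce to the case of honestly nested lattices inside a single $\da_{12}(D_2)^n/\da_{12}(D_1)^n$ and there invoke the functoriality of Kapranov's construction~\eqref{dim-iso} with respect to morphisms of short exact sequences.

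Finally, for the compatibility of the splittings I would use that the three special subrings decompose along the filtration, $\da_{12}^n = \da_{12}^{n_1}\oplus\da_{12}^{n_2}$, $\da_{02}^n = \da_{02}^{n_1}\oplus\da_{02}^{n_2}$ and $\da_{01}^n = \da_{01}^{n_1}\oplus\da_{01}^{n_2}$, so that all three are filtration-compatible. For $g$ in $P_{n_1,n_2}\cap GL_n(\da_{12})$ splitting~\eqref{spl12} sends $g\mapsto(g,0)$, and the zero ``dimension theory'' decomposes as $0\otimes 0$, matching the Baer sum of the zero splittings. For $g$ in $P_{n_1,n_2}\cap GL_n(\da_{02})$, the subspace $\tilde{\mu}_{\da_{12}^n, g\da_{12}^n}$ decomposes into $\tilde{\mu}_{\da_{12}^{n_1}, p_1(g)\da_{12}^{n_1}}$ on the sub and $\tilde{\mu}_{\da_{12}^{n_2}, p_2(g)\da_{12}^{n_2}}$ on the quotient, whence $\mu_{\da_{12}^n, g\da_{12}^n}$ corresponds to $\mu_{\da_{12}^{n_1}, p_1(g)\da_{12}^{n_1}} \otimes \mu_{\da_{12}^{n_2}, p_2(g)\da_{12}^{n_2}}$, so splitting~\eqref{spl02} matches the Baer sum of the two $\mu$-splittings. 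The case of $g$ in $P_{n_1,n_2}\cap GL_n(\da_{01})$ and splitting~\eqref{spl01} is the most delicate, because $\nu$ is built from the discrete/linearly compact exact sequence: here I would arrange the discrete subspaces $\tilde{\nu}$ and their linearly compact quotients into a commutative $3\times 3$ diagram of short exact sequences (filtration versus discrete/compact) and check by a nine-lemma-type diagram chase that the two resulting decompositions of $\Dim(\da_{12}^n \mid g\da_{12}^n)$ agree, so that $\nu_{\da_{12}^n, g\da_{12}^n}$ decomposes as $\nu\otimes\nu$ and splitting~\eqref{spl01} likewise matches the Baer sum.
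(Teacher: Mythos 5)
Your proposal is correct and follows essentially the same route as the paper: the decomposition via the $P_{n_1,n_2}$-invariant exact triple $0 \to \da_X^{n_1} \to \da_X^n \to \da_X^{n_2} \to 0$ together with isomorphism~\eqref{dim-iso} for the torsor identification, and the analogous exact triples for $\da_{01}^n$ and $\da_{02}^n$ for the compatibility of the splittings. You simply spell out in more detail the fiberwise identification, the verification of the group law, and the $\mu$- and $\nu$-splitting checks that the paper leaves implicit.
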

\begin{proof} Construction of isomorphism is directly obtained from exact triple
$$
0 \lrto \da_X^{n_1}  \lrto \da_X^n \lrto \da_X^{n_2}  \lrto 0 \, \mbox{,}
$$
the fact that the action of $P_{n_1, n_2}$ preserves this triple, from construction of the group $\widetilde{GL_n(\da_X)}$ and
from isomorphism~\eqref{dim-iso}.

The compatibility of sections follows from  exact triples
$$
0 \lrto \da_{01}^{n_1} \lrto \da_{01}^n \lrto \da_{01}^{n_2} \lrto 0  \qquad \mbox{and} \qquad
0 \lrto \da_{01}^{n_1} \lrto \da_{01}^n \lrto \da_{01}^{n_2} \to 0
$$
and that the corresponding groups $P_{n_1,n_2} \cap GL_n(\da_{01})$
or $P_{n_1,n_2} \cap GL_n(\da_{02})$ act on these triples.
\end{proof}

\begin{nt} \label{analog} \em
A direct analog of Proposition~\ref{split} is valid for central extension~\eqref{2centr-ext} (see~\cite[Prop.~4]{Osip3}), where we use that
$\widehat{GL_n(\da_X)} = \Theta^{-1}(SL_n(\da_X)) \rtimes \da_X^*$, and we take the splittings over the intersections of $SL_n(\da_X)$ with the corresponding subgroups that come from Proposition~\ref{split} and the identity splitting over the intersection of $\da_X^*$ with the corresponding subgroups.

The behaviour of the pullback of central extension~\eqref{2centr-ext} via the map ${P_{n_1, n_2}  \hookrightarrow GL_n(\da_X)}$
differs from what is described in Proposition~\ref{parabolic}   for central extension~\eqref{1centr-ext}, see~\cite[Prop.~3]{Osip3}. This is because
central extension~\eqref{2centr-ext} is connected with the second Chern number of  a rank $n$ locally free sheaf of $\oo_X$-modules, see~\cite[\S~3.3]{Osip3} and Remark~\ref{c2} below.
\end{nt}

\section{Triviliazations of locally free sheaves}
\label{trivializations}

We describe trivializations of locally free  sheaves of $\oo_X$-modules over  completions of local rings of scheme points of $X$.
We consider a point $x \in X$ as a closed scheme point of $X$, the generic point of an irreducible curve $C$  on $X$ as a non-closed point of $X$, and also consider the generic point of $X$.

Let $\E$ be a locally free sheaf of $\oo_X$-modules of rank $n$ on $X$.

For   a (closed) point $x$ of $X$, the completion of the stalk of $\E$ at $x$ is a free $\hat{\oo}_{x,X}$-module. Let $e_x$ be a a basis of this module. We will also call such a basis a basis of $\E$ restricted to $\Spec \hat{\oo}_{x,X}$.

For  an irreducible curve  $C$ on $X$, the completion of the stalk of $\E$ at the generic point of $C$ is a free $\oo_{K_C}$-module.
Let $e_C$  be a basis of this module.  We will also call such  a basis a basis   of $\E$ restricted  to $\Spec \oo_{K_C}$.

The stalk of $\E$ at the generic point of $X$ is a $k(X)$-vector space.
Let $e_0$  be a basis of this vector space. We will call such a basis a basis   of $\E$ restricted  to $\Spec k(X)$.

By embedding of the completions of stalks of  $\E$ at scheme points of  $X$
to the tensor products  (over the local rings of the points)
of the stalks of  $\E$  and the corresponding rings: $K_x$, $K_C$ or $K_{x,C}$, we obtain
 {\em transition matrices}
$$\alpha_{02, x}  \in GL_n(K_x) \, \mbox{,} \qquad  \alpha_{01,C}  \in GL_n(K_C)  \, \mbox{,}  \qquad  \alpha_{21,x,C} \in GL_n(\oo_{K_{x,C}})$$
defined by the following equalities
$$
e_0 = \alpha_{02,x} \, e_x  \, \mbox{,} \qquad e_0 = \alpha_{01,C} \, e_C \, \mbox{,}  \qquad e_x = \alpha_{21,x,C} \, e_C  \, \mbox{.}
$$

Denote collections of matrices $\alpha_{02} =\prod\limits_x \alpha_{02,x}$, where $x$ runs over all closed points of $X$,
${ \alpha_{01}  = \prod\limits_C  \alpha_{01,C}    }$, where $C$
runs over all irreducible curves $C$ on $X$,  $ \alpha_{21}  = \prod\limits_{x \in C}  \alpha_{21,x,C}    $, where $x \in C$ runs over all pairs $x \in C$ with $x$ a closed point on an irreducible curve $C$ on $X$.

Define now $\alpha_{ij} = \alpha_{ji}^{-1}$, where $i \ne j$ from the set $\{0, 1, 2  \}$. Then for any $i \ne j \ne k$ from the set
$\{ 0,1,2\}$ we have an equality ({\em cocycle identity})  in  $GL_n \left( \prod\limits_{x \in C} K_{x,C} \right)$, where we use diagonal embeddings~\eqref{diag-emb},
\begin{equation}  \label{cocycle}
\alpha_{ij}  \cdot \alpha_{jk} \cdot  \alpha_{ki}  =1  \, \mbox{.}
\end{equation}

Note that if we change the chosen bases, then the matrices will change in the following way
\begin{gather}
\alpha_{02}  \longmapsto \alpha_{0} \cdot \alpha_{02} \cdot \alpha_{2}^{-1} \, \mbox{,}  \qquad
\alpha_{01}  \longmapsto \alpha_{0} \cdot \alpha_{01} \cdot \alpha_{1}^{-1}  \, \mbox{,}  \qquad
\alpha_{21}   \longmapsto  \alpha_{2} \cdot \alpha_{21}  \cdot \alpha_{1}^{-1}  \, \mbox{,}   \label{change} \\
\mbox{where} \qquad
\alpha_{0} \in GL_n(k(X)) \subset GL_n(\da_X)   \,  \mbox{,} \qquad
\alpha_{1} \in GL_n \left(\prod_C \oo_{K_C} \right)  \subset GL_n(\da_X)   \, \mbox{,}   \nonumber \\
\alpha_{2}  \in GL_n \left(\prod_x  \hat{\oo}_{x,X} \right)  \subset GL_n(\da_X)  \, \mbox{.}  \nonumber
\end{gather}

Using diagonal embeddings~\eqref{diag-emb} and by means of non-difficult reasonings, one can show  (see~\cite[\S~3.3]{Osip3}) that
\begin{equation}  \label{subgroups}
\alpha_{02}  \in GL_n(\da_{02}) \, \mbox{,}  \qquad  \alpha_{01} \in GL_n(\da_{01})  \, \mbox{,}  \qquad  \alpha_{21}  \in GL_n(\da_{12})   \, \mbox{.}
\end{equation}

We will use notation $\alpha_{ij, \E}$ instead of $\alpha_{ij}$ when it is not clear from the context which bundle this notation is associated with.
\begin{nt} \label{trivial}  \em
 If $n=1$, then $\E = \oo_X(D)$ for some divisor $D$ on $X$. We have that (see Section~\ref{intersect})
$$
j_{2, D}  = \alpha_{02, \E} \quad \mbox{and}  \quad  j_{1,D}    = \alpha_{01, \E}  \, \mbox{.}
$$
Therefore from formula~\eqref{inter-index} we obtain that for invertible sheaves ${\mathcal F}$ and $\mathcal G$ on $X$, their intersection index
$({\mathcal F}, \mathcal G)$ is
\begin{equation}  \label{inter-index-new}
({\mathcal F}, {\mathcal G}) = \langle \alpha_{01, {\mathcal G}} , \alpha_{02, {\mathcal F}}   \rangle =
 \langle  \alpha_{02, {\mathcal F}},  \alpha_{10, {\mathcal G}}   \rangle =
\langle  \alpha_{12, {\mathcal F}},  \alpha_{10, {\mathcal G}}   \rangle =
\langle \alpha_{12, {\mathcal F}} , \alpha_{20, {\mathcal G}}   \rangle   \, \mbox{,}
\end{equation}
where we used also splittings of central extension~\eqref{1centr-ext} over certain subgroups from Proposition~\eqref{split}, whence it follows that  $\langle \cdot, \cdot \rangle$ restricted to these subgroups equals $0$.
\end{nt}

\begin{defin}  \label{number}
Let $\E$ be a locally free sheaf of $\oo_X$-modules of rank $n$ on $X$.
For any $i \ne j$ from $\{0, 1,2 \}$ let $\widetilde{\alpha_{ij}}$ be the canonical lift of $\alpha_{ij}$ for $\E$ to $\widetilde{GL_n(\da_X)}$ by splitting over the corresponding subgroup, see Proposition~\ref{split} and formula~\eqref{subgroups}.  We { define}
$$
f_{\E}  = \widetilde{\alpha_{02}}  \cdot \widetilde{\alpha_{21}} \cdot  \widetilde{\alpha_{10}}     \in \Z  \, \mbox{.}
$$
\end{defin}
This definition is correct, because from formula~\eqref{cocycle} it follows that $f_E \in \Z$, and by Proposition~\ref{split} and formula~\eqref{change}, the integer $f_{\E}$ depends only on $\E$, i.e., this integer does not depend on the choice of bases $e_0$, $\{ e_x \}$, $\{ e_C \}$
for~$\E$.

\smallskip

As we have already mentioned before, {\em the goal of this paper} is to calculate $f_E$ and to relate the calculations of  this integer, given by two different ways, to the Riemann-Roch theorem for $\E$ on $X$ (without the Noether formula for $\oo_X$).

\begin{nt}  \label{c2}  \em
For central extension~\eqref{2centr-ext}
$$
0 \lrto \Z \lrto \widehat{GL_n(\da_X)}  {\lrto} GL_n(\da_X)  \lrto 1
$$
let $\widehat{\alpha_{02}}, \widehat{\alpha_{21}}, \widehat{\alpha_{10}}$ be the corresponding canonical lifts  of
elements $\alpha_{02}, \alpha_{21},  \alpha_{10} $ for $\E$  to $\widehat{GL_n(\da_X)}$  by corresponding splittings over special subgroups of $GL_n(\da_X)$, see Remark~\ref{analog}.
Then the following integer does not depend on the choice of bases $e_0$, $\{ e_x \}$, $\{ e_C \}$ and by~\cite[Theorem 1]{Osip3}   we have
$$
\widehat{\alpha_{02}} \cdot \widehat{\alpha_{21}} \cdot  \widehat{\alpha_{10}} = c_2(\E) \, \mbox{.}
$$
\end{nt}

\section{The first way to calculate $f_{\E}$}  \label{first-way}
Let $\E$ be a locally free sheaf of $\oo_X$-modules of rank $n$ on $X$. We calculate the integer $f_{\E}$ (see Definition~\ref{number})
in the first way.

\begin{prop} \label{tr-bl} The following properties are satisfied.
\begin{enumerate}
\item  \label{item1} Consider an exact triple of finite rank locally free sheaves of $\oo_X$-modules:
$$
0 \lrto {\E}_1 \lrto {\E}  \lrto {\E_2}  \lrto 0  \, \mbox{.}
$$
We have $f_{\E}  = f_{\E_1}  + f_{\E_2}$.
\item  \label{item2}
Let $\pi : Y \lrto X$ be the blow-up of a point on $X$. We have $f_{\E} = f_{\pi^*(\E)}$.
\end{enumerate}
\end{prop}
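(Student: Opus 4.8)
The plan is to treat the two parts separately, deriving part~\ref{item1} from the parabolic decomposition of Proposition~\ref{parabolic} and part~\ref{item2} from a comparison of the adelic data on $X$ and on $Y$ localized at the exceptional locus.

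For part~\ref{item1} I would first choose the bases $e_0$, $\{e_x\}$, $\{e_C\}$ of $\E$ compatibly with the filtration. At every closed point $x$ the completed stalks form an exact triple of free $\hat{\oo}_{x,X}$-modules $0\to(\E_1)_x^{\wedge}\to\E_x^{\wedge}\to(\E_2)_x^{\wedge}\to 0$, which splits since $(\E_2)_x^{\wedge}$ is free; I pick $e_x$ so that its first $n_1$ vectors are a basis of $(\E_1)_x^{\wedge}$ and the remaining $n_2$ project to a basis of $(\E_2)_x^{\wedge}$. The same adapted choice is made over each $\oo_{K_C}$ and over $k(X)$. With such bases the transition matrices $\alpha_{02},\alpha_{21},\alpha_{10}$ of $\E$ all lie in $P_{n_1,n_2}$, and their block projections $p_1(\alpha_{ij})$, $p_2(\alpha_{ij})$ are exactly the transition matrices of $\E_1$ and $\E_2$. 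Now I invoke Proposition~\ref{parabolic}: the pullback of $\widetilde{GL_n(\da_X)}$ to $P_{n_1,n_2}$ is the Baer sum of $p_1^*\widetilde{GL_{n_1}(\da_X)}$ and $p_2^*\widetilde{GL_{n_2}(\da_X)}$, and, crucially, the canonical splittings of Proposition~\ref{split} are carried to the corresponding splittings on each block. Hence the canonical lift $\widetilde{\alpha_{ij}}$ corresponds under the Baer sum to the pair of canonical lifts for $\E_1$ and $\E_2$. Since in each block the triple product equals the central element $f_{\E_1}$, resp. $f_{\E_2}$, and the central $\Z$ of the Baer sum is the diagonal, the product $\widetilde{\alpha_{02}}\widetilde{\alpha_{21}}\widetilde{\alpha_{10}}$ is represented by $(f_{\E_1},f_{\E_2})$ and therefore equals $f_{\E_1}+f_{\E_2}$.

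For part~\ref{item2} I would exploit that $\pi$ is an isomorphism over $X\setminus\{P\}$, where $P$ is the blown-up point, that $k(Y)=k(X)$, and that the closed points and irreducible curves of $Y$ lying off the exceptional curve $E$ are in bijection with those of $X$ off $P$ (strict transforms inducing the same discrete valuations). Since $\E$ is locally free, I first choose the bases so that $\E$ is trivialized in a neighbourhood of $P$, and pull them back by $\pi$ to obtain bases of $\pi^*\E$ that are trivial in a neighbourhood of $E$. With these choices the transition matrices of $\pi^*\E$ at every flag $y\in C$ not lying over $P$ coincide with those of $\E$, while at every flag supported on $E$ (and at the points of $E$) they are the identity. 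The integer $f$ would then be compared in the two central extensions $\widetilde{GL_n(\da_X)}$ and $\widetilde{GL_n(\da_Y)}$: the contributions from flags away from the exceptional locus agree, and the contributions from the new adelic directions introduced by $E$ vanish because the relevant matrices are trivial there.

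The main obstacle is exactly this last comparison. One must make rigorous the sense in which $f_{\E}$ ``localizes'' at the exceptional locus and show that the part of $\widetilde{GL_n(\da_Y)}$ attached to $E$ and to the points of $E$ contributes nothing once $\pi^*\E$ is trivial there; the delicate point is that the splitting over $GL_n(\da_{01})$, and hence $\nu$, is built from global data along curves, so a naive point-by-point vanishing is not automatic. A convenient way to organize the argument is to reduce first to line bundles: by part~\ref{item1} applied on both $X$ and $Y$ (note that $\pi^*$ preserves exact triples of vector bundles, being locally split) the difference $f_{\E}-f_{\pi^*\E}$ is additive and factors through the Grothendieck group $K_0(X)$, which for a smooth projective surface is generated by classes of line bundles, so it suffices to treat $\oo_X(D)$. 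For a line bundle one may further move $D$ off $P$ within its class, so that $\pi^*\oo_X(D)=\oo_Y(\pi^*D)$ is trivial near $E$ and the comparison above is cleanest; here the intersection-index description of Remark~\ref{trivial}, together with the invariance of intersection numbers and of $\chi$ under blow-up of a smooth point, is the expected input.
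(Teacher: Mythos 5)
Your treatment of item~\ref{item1} is correct and is essentially the paper's own argument: choose bases adapted to the subsheaf $\E_1$, observe that all three transition matrices then lie in $P_{n_1,n_2}$ with block projections equal to the transition matrices of $\E_1$ and $\E_2$, and conclude from Proposition~\ref{parabolic} together with the compatibility of the canonical splittings (the Baer sum adding the central elements).

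For item~\ref{item2}, however, there is a genuine gap, and you have in fact located it yourself. Your direct argument stops exactly at the point that needs proof: the compatibility, under the passage from $X$ to $Y$, of the two central extensions and of the three canonical splittings --- in particular the splitting over $GL_n(\da_{01})$, which is built from the global subspaces $\tilde{\nu}$ along curves. The paper supplies precisely this missing ingredient: after trivializing $\E$ near the blown-up point $x$, one uses the decomposition $\da_Y = \da_X \times {\prod_{p \in R}}' K_{p,R}$ with $R = \pi^{-1}(x)$, the induced embedding $\gamma : GL_n(\da_X) \hookrightarrow GL_n(\da_Y)$, and a canonical isomorphism $\delta : \widetilde{GL_n(\da_X)} \to \gamma^*(\widetilde{GL_n(\da_Y)})$ carrying $\widetilde{\alpha_{ij,\E}}$ to $\widetilde{\alpha_{ij,\pi^*(\E)}}$; this is what makes the ``localization'' rigorous. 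Your fallback reduction does not repair the gap: the difference $f_{\E} - f_{\pi^*(\E)}$ does factor through $K_0(X)$ by additivity, but $K_0(X)$ of a smooth projective surface is \emph{not} generated by classes of line bundles in general. For a complex K3 surface the subgroup generated by line bundles is countable (since $\mathrm{Pic}$ is finitely generated), whereas $F^2K_0(X) \cong CH_0(X)$ is uncountable by Mumford's theorem. The reduction of such statements to line bundles in this paper is achieved by the splitting principle, which itself requires blow-ups, so invoking it here would be circular; this is exactly why item~\ref{item2} must be proved directly for arbitrary rank.
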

\begin{proof}
1. The integers $f_{\E_i}$, where $i \in \{1,2,3 \}$, do not depend on the choice of bases of $\E_i$ at completions of local rings of scheme points of $X$. Therefore we can choose, first, the bases for $\E_1$ and then complete them to the bases of $\E$. Hence
all the transition matrices $\alpha_{ij}$ belong to the subgroup $P_{n_1, n_2}  \subset GL_n(\da_X)$, where $n_1$ or $n_2$ are the corresponding ranks of  $\E_1$ or $\E_2$.
Now we apply  Proposition~\ref{parabolic}.

2. (Cf. the proof of~\cite[Theorem~1]{Osip3}.) Let $\pi$ be the blow-up of a point $x \in X$, and $\pi^{-1}(x)= R$. We note that
\begin{equation}  \label{decomp}
\da_Y = \da_X \times {\prod_{p \in R}}'  K_{p,R}  \, \mbox{.}
\end{equation}
Since $f_{\E}$  and $f_{\pi^*(\E)}$ do not depend on the choice of bases for $\E$ and $\pi^*(\E)$, we choose the special bases. Fix a trivialization of $\E$ on an  open neighbourhood of $x$ on $X$. This trivialization induces the bases $e_0$, $e_x$, $e_R$ and $e_p$, where $p \in R$. We
have a canonical isomorphism
$$
\delta \; : \;  \widetilde{GL_n(\da_X)}  \lrto \gamma^*(\widetilde{GL_n(\da_Y)})  \, \mbox{,}
$$
where the embedding $\gamma : GL_n(\da_X)  \hookrightarrow GL_n(\da_Y)$  is induced by decomposition~\eqref{decomp}. Now for any $i \ne j $ from
 $\{0, 1, 2 \}$ we have
$$
\gamma(\alpha_{ij, \E})= \alpha_{ij, \pi^*(\E)}  \,  \qquad  \mbox{and} \qquad
\delta(\widetilde{\alpha_{ij, \E}}) = \widetilde{\alpha_{ij, \pi^*(\E)}}  \, \mbox{,}
$$
where we consider $\widetilde{\alpha_{ij, \pi^*(\E)}}$ as elements of the group $\gamma^*(\widetilde{GL_n(\da_Y)})$. This implies the statement.
\end{proof}

\medskip

In the sequel, we denote also the intersection index $\E \cdot \ff = (\E, \ff)  \in \Z$ or $c_1(\E)^2= \E \cdot \E = (\E, \E) \in \Z$,
where $\E$ and $\ff$ are invertible sheaves on $X$.

\begin{Th}  \label{Th1}
Let $\E$ be a locally free sheaf of $\oo_X$-modules of rank $n$ on a smooth projective surface $X$ over a field $k$. We have
$$
f_{\E} = \left(\chi(\E)  - n \chi(\oo_X) \right) -c_1(\E)^2  + 2 c_2(\E) = \left(\chi(\E)  - n \chi(\oo_X) \right) - 2 \, {\rm ch}_2(\E)  \, \mbox{.}
$$
\end{Th}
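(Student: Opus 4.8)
The plan is to prove the equivalent statement $f_\E = g_\E$, where $g_\E := \bigl(\chi(\E) - n\chi(\oo_X)\bigr) - 2\,{\rm ch}_2(\E)$, by showing that both sides are additive on short exact triples of locally free sheaves and invariant under the blow-up of a point, and then reducing to the case of line bundles. For $f_\E$ these two properties are exactly Proposition~\ref{tr-bl}. For $g_\E$ additivity is the additivity of the Euler characteristic together with that of the Chern character, while blow-up invariance holds because $\chi(\pi^*\E) = \chi(\E)$ (from $\pi_*\oo_Y = \oo_X$, $R^{>0}\pi_*\oo_Y = 0$ and the projection formula) and ${\rm ch}_2(\pi^*\E) = {\rm ch}_2(\E)$ as integers (since $\pi^*$ commutes with the Chern character and pushforward along a point-blow-up preserves degrees of $0$-cycles). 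Hence $f_\E - g_\E$ is additive and blow-up invariant and vanishes on $\oo_X^n$.

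Next I would reduce to line bundles. If $\rank\E = n \ge 3$, a general global section of $\E \otimes \oo_X(mH)$ for an ample divisor $H$ and $m \gg 0$ is nowhere vanishing and yields an exact triple $0 \lrto \oo_X(-mH) \lrto \E \lrto Q \lrto 0$ with $Q$ locally free of rank $n-1$; if $n = 2$ such a section vanishes on a finite set $Z$, and after blowing up $Z$ the pullback $\pi^*\E$ acquires a sub-line-bundle with invertible quotient. Using additivity and, in the rank-$2$ step, blow-up invariance of both $f$ and $g$, this lets me assume $\E$ carries a filtration with invertible quotients $L_1,\dots,L_n$. Then $f_\E = \sum_i f_{L_i}$, and since $c(\E) = \prod_i(1 + c_1(L_i))$ gives $c_1(\E) = \sum_i c_1(L_i)$ and $c_2(\E) = \sum_{i<j} c_1(L_i)\,c_1(L_j)$, one checks that $-c_1(\E)^2 + 2c_2(\E) = -\sum_i c_1(L_i)^2$; thus $g_\E = \sum_i\bigl(\chi(L_i) - \chi(\oo_X) - c_1(L_i)^2\bigr)$, and the whole theorem follows from the single identity $f_L = \chi(L) - \chi(\oo_X) - c_1(L)^2$ for a line bundle $L$. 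Note that the $2c_2$ term is produced automatically by these cross terms, so no separate codimension-$2$ computation is needed.

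For the line-bundle case, take $L = \oo_X(D)$, so that $\alpha_{02} = j_{2,D}$, $\alpha_{01} = j_{1,D}$ and $\alpha_{21} = j_{2,D}^{-1} j_{1,D} \in \da_{12}^*$ as in Remark~\ref{trivial} and~\eqref{subgroups}. The canonical lifts from Proposition~\ref{split} are $\widetilde{\alpha_{02}} = (j_{2,D}, \mu_{\da_{12}, \da_{12}(-D)})$, $\widetilde{\alpha_{21}} = (j_{2,D}^{-1}j_{1,D}, 0)$ and $\widetilde{\alpha_{10}} = (j_{1,D}^{-1}, \nu_{\da_{12}, \da_{12}(D)})$, using $j_{2,D}\da_{12} = j_{1,D}\da_{12} = \da_{12}(-D)$. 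Multiplying out by the group law~\eqref{group law}, the $GL_1(\da_X)$-components telescope to $1$ by~\eqref{cocycle}, so $f_L$ lands in the central $\Z$; tracking the $\Z$-torsor components through the equivariances $g(\mu_{E_1,E_2}) = \mu_{gE_1,gE_2}$ and $h(\nu_{E_1,E_2}) = \nu_{hE_1,hE_2}$ (for $g\in\da_{02}^*$, $h\in\da_{01}^*$) and the composition isomorphism~\eqref{dim-centr}, I expect to arrive at $f_L = \mu_{\da_{12}, \da_{12}(-D)} \otimes \nu_{\da_{12}(-D), \da_{12}} \in \Dim(\da_{12} \mid \da_{12}) = \Z$. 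Since $\nu_{\da_{12},\da_{12}(-D)} \otimes \nu_{\da_{12}(-D),\da_{12}} = 0$, Proposition~\ref{Euler} (applied with $\f = \oo_X$, $\g = \oo_X(-D)$) then gives $f_L = \chi(\oo_X) - \chi(\oo_X(-D))$.

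It remains to identify this with $\chi(L) - \chi(\oo_X) - c_1(L)^2$, i.e. to prove $\chi(\oo_X(D)) + \chi(\oo_X(-D)) - 2\chi(\oo_X) = D^2$. For an effective curve $C$ this follows by adding the Euler characteristics of the triples $0 \lrto \oo_X(-C) \lrto \oo_X \lrto \oo_C \lrto 0$ and $0 \lrto \oo_X \lrto \oo_X(C) \lrto \oo_C(C) \lrto 0$ and applying Riemann–Roch on the curve $C$ (the abstract form~\eqref{ARR}), and the general case follows by bilinearity. I expect the main obstacle to be the bookkeeping in the line-bundle computation — correctly carrying the $\mu$- and $\nu$-elements through the group action and the torsor isomorphism~\eqref{dim-centr} — together with justifying the geometric reduction to filtered bundles via blow-ups; the matching of the $c_2$ term through the cross terms is precisely what makes the rank-$n$ statement fall out of the rank-$1$ computation.
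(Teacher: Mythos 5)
Your proposal is correct, and its first half — additivity on short exact triples, invariance under point blow-ups, and the splitting principle to reduce to $n=1$ — is exactly the paper's route via Proposition~\ref{tr-bl}. Where you genuinely diverge is the rank-one endgame. The paper rewrites $f_{\E}$ as $\widetilde{\alpha_{10}}\cdot\widetilde{\alpha_{02}}=f_{\E}\cdot\widetilde{\alpha_{12}}$, extracts the term $-(\E,\E)$ from the commutator identity $\langle j_{2,S},j_{1,T}\rangle=-(S,T)$ of formula~\eqref{inter-index} (imported from Parshin's adelic intersection theory), and gets $\chi(\E)-\chi(\oo_X)$ from Lemma~\ref{lem1} together with Proposition~\ref{Euler}. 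You instead expand the triple product $\widetilde{\alpha_{02}}\cdot\widetilde{\alpha_{21}}\cdot\widetilde{\alpha_{10}}$ directly in the given order; your computation $f_L=\mu_{\da_{12},\da_{12}(-D)}\otimes\nu_{\da_{12}(-D),\da_{12}}=\chi(\oo_X)-\chi(\oo_X(-D))$ checks out (the equivariances and the group law are applied correctly, and Proposition~\ref{Euler} with $\f=\oo_X$, $\g=\oo_X(-D)$ does the rest) and never touches the commutator pairing. You then convert to the stated form via the classical identity $\chi(\oo_X(D))+\chi(\oo_X(-D))-2\chi(\oo_X)=D^2$. This is a legitimate trade: the identity is the $K$-free symmetrization of Riemann--Roch, so there is no circularity, and it shortens the adelic bookkeeping. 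The one thin spot is precisely ``the general case follows by bilinearity'': you verify the identity only for effective divisors, and extending to arbitrary $D=D_1-D_2$ requires knowing that $(D_1,D_2)\mapsto\chi(\oo_X)-\chi(\oo_X(-D_1))-\chi(\oo_X(-D_2))+\chi(\oo_X(-D_1-D_2))$ equals the intersection form — a standard but nontrivial lemma that should be proved or cited explicitly, since in your argument it carries the same weight that formula~\eqref{inter-index} carries in the paper's.
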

\begin{proof}
The second equality is just reformulation of the first equality. Therefore we prove the first equality.

The left hand side and the right hand side of the equality are additive with respect to the short exact sequences of locally free sheaves of $\oo_X$-modules and are preserved by blow-ups of points (see Proposition~\ref{tr-bl} for the left hand side of the equality,
in the right hand side of the equality it is clear for  ${\rm ch}_2$, and for the  difference of Euler characteristics this follows from the simple case  $\chi(\ff) - \chi(\E) = \chi(\ff / \E)$, where  $\ff$  is  a locally free sheaf of   $\oo_X$-modules of rank $n$, $\E \subset \ff$ and the sheaf $\ff$ coincides with the sheaf  $\E$ in a neighbourhood of a point which is blew up). Therefore, by the splitting principle for locally free sheaves on smooth surfaces (cf. the proof of~\cite[Theorem~1]{Osip3}), it is enough to prove the equality for the case $n=1$.

So, we suppose $n=1$.
The chosen basis $e_0$ of $\E$ at the generic point of $X$ gives the embedding of $\E$ to the constant sheaf $k(X)$ on $X$. (Therefore $\E = \oo_X(D)$
for some divisor $D$ on $X$.) We fix also the other bases for $\E$ and hence the transitions matrices  $\alpha_{ij}$ for $\E$, where $i \ne j $ from
$\{ 1,2 \}$, as in Section~\ref{trivializations}.

We have to prove that
\begin{equation}  \label{rest}
f_{\E} = \left(\chi(\E)  - n \chi(\oo_X) \right)  - (\E, \E)  \, \mbox{.}
\end{equation}

 From formula~\eqref{inter-index-new}, which describes the intersection index of invertible sheaves as the commutator of lifting of corresponding elements to $\widetilde{GL_1(\da_X)}$, we have
\begin{equation}  \label{first-eq}
\widetilde{\alpha_{10}} \cdot \widetilde{\alpha_{02}}=( -(\E, \E)) \cdot \widetilde{\alpha_{02}} \cdot \widetilde{\alpha_{10}}  \, \mbox{,}
\end{equation}
where  we consider  $ -(\E, \E) $ as an element of the central subgroup  $ \Z \subset \widetilde{GL_1(\da_X)}$.

Besides, using conjugation, we obtain
$$
f_{\E}= \widetilde{\alpha_{02}}  \cdot \widetilde{\alpha_{21}} \cdot  \widetilde{\alpha_{10}} =
\widetilde{\alpha_{10}} \cdot
\widetilde{\alpha_{02}}  \cdot \widetilde{\alpha_{21}} \cdot  \widetilde{\alpha_{10}} \cdot \widetilde{\alpha_{10}}^{-1}
= \widetilde{\alpha_{10}} \cdot
\widetilde{\alpha_{02}}  \cdot \widetilde{\alpha_{21}}  \, \mbox{.}
$$
Thus, we have
\begin{equation}  \label{second-eq}
\widetilde{\alpha_{10}} \cdot
\widetilde{\alpha_{02}}  =   f_{\E} \cdot \widetilde{\alpha_{12}}  \, \mbox{,}
\end{equation}
where we consider $ f_{\E}$ as an element  of the central subgroup $\Z \subset \widetilde{GL_1(\da_X)}$.

From formulas~\eqref{first-eq}-\eqref{second-eq} we have that formula~\eqref{rest} will follow from the following lemma.

\begin{lemma}  \label{lem1}
Let $\E$ be an invertible sheaf on $X$. Then we have
$$
\widetilde{\alpha_{02}} \cdot \widetilde{\alpha_{10}}  =
(\nu_{\da_{12}, \da_{12}(\E)}  - \mu_{\da_{12}, \da_{12}(\E)})
\cdot \widetilde{\alpha_{12}} =
\left(\chi(\E)  -  \chi(\oo_X) \right)  \cdot \widetilde{\alpha_{12}}  \, \mbox{,}
$$
where we consider ${\nu_{\da_{12}, \da_{12}(\E)}  - \mu_{\da_{12}, \da_{12}(\E)}}$   and  ${\chi(\E)  - n \chi(\oo_X)}$ as  elements  of the central subgroup $\Z \subset \widetilde{GL_1(\da_X)}$, and the chosen and fixed basis $e_0$ of $\E$ at the generic point of $X$ gives the embedding of $\E$
to the constant sheaf $k(X)$ on $X$.
\end{lemma}

We note that since $\alpha_{02}$ and $\alpha_{10}$  are from $\da_X^*$, we have that ${\alpha_{02}  \cdot \alpha_{10} = \alpha_{10}  \cdot \alpha_{02} = \alpha_{12}}$.

We prove now this lemma.
The second equality in the statement of the lemma follows immediately  from the first equality and Proposition~\ref{Euler}. Therefore we prove the first equality.

Let $c \in \Z \subset \widetilde{GL_1(\da_X)} $  such that
$\widetilde{\alpha_{02}} \cdot \widetilde{\alpha_{10}}  =  c \cdot \widetilde{\alpha_{12}} $. Then we have
\begin{equation}  \label{form-c}
\widetilde{\alpha_{10}} = c \cdot \widetilde{\alpha_{20}} \cdot \widetilde{\alpha_{12}} \, \mbox{.}
\end{equation}
We note that $\alpha_{12} \da_{12} = \da_{12}$. Therefore $\widetilde{\alpha_{12}} = (\alpha_{12}, 0)$,
where ${0 \in \Z = \Dim(\da_{12}  \mid \da_{12})}$.  Hence, product with $\widetilde{\alpha_{12}}$ in the right hand side of formula~\eqref{form-c}
does not affect the ``dimension theory''.

Hence (see also Remark~\ref{trivial}, where transition matrices for an invertible sheaf are explicitly given) and by direct calculation we obtain that
$$
c = \nu_{\da_{12}, \da_{12}(\E)}  - \mu_{\da_{12}, \da_{12}(\E)}  \, \mbox{.}
$$
 This finishes the proof of the lemma and, consequently,  the proof of the theorem.

\end{proof}

\begin{nt}  \label{lo-pr-2} \em
From Theorem~\ref{Th1}, Remark~\ref{c2}  and formula~\eqref{inter-index-new} we obtain the following ``local (adelic) decomposition'' for the difference of Euler characteristics for a rank $n$ locally free sheaf $\E$ of $\oo_X$-modules  and the sheaf $\oo_X^n$, using central extensions~\eqref{1centr-ext} and~\eqref{2centr-ext} and transition matrices for $\E$:
$$
\chi(\E)  - n \chi(\oo_X)=  \widetilde{\alpha_{02}}  \cdot \widetilde{\alpha_{21}} \cdot  \widetilde{\alpha_{10}}
- 2  \widehat{\alpha_{02}}  \cdot \widehat{\alpha_{21}} \cdot  \widehat{\alpha_{10}}
+ \langle \det(\alpha_{02}), \det(\alpha_{10})     \rangle   \, \mbox{.}
$$
This formula generalizes formulas~\eqref{diff-1}-\eqref{deg} from \S~\ref{Intro} (Introduction) from the case of smooth projective algebraic curves to the case of smooth projective algebraic surfaces.
\end{nt}

\section{The second way to calculate $f_{\E}$ and the Riemann-Roch theorem}  \label{second-way}

There is another way to calculate $f_{\E}$ for a locally free sheaf $\E$ of $\oo_X$-modules of rank $n$ on $X$. We will do this way in this section.  This another way leads to an answer,
which uses also another invariants of $\E$ and $X$,
 see Theorem~\ref{Th2} below. And the comparison of this answer with the answer obtained in Theorem~\ref{Th1} immediately gives us the Riemann-Roch theorem for $\E$ on $X$ (without the Noether formula), see Corollary~\ref{Cor1}.

{\em In the sequel, we suppose that the basic field $k$ is perfect} (it is important for the theory of two-dimensional residues used below).
The idea for the new calculation of $f_{E}$ is to use the fact that $\da_X$ is self-dual as a $C_2$-space over $k$ (or a $2$-Tate vector space) and to make some calculations as on the ``dual side''.

More exactly, the self-duality of $\da_X$ is given by the following pairing (cf.~\cite[\S~2]{OP}). We fix
$\omega \in \Omega^2_{k(X)/k}$ such that $\omega \ne 0$. Then by $\omega $ we construct a bilinear symmetric non-degenerate  pairing by means of the residues on  two-dimensional local fields (see more about residues on $n$-dimensional local fields in~\cite{Par, Y})
\begin{equation}  \label{self-dual}
   \da_X \times \da_X  \lrto k \; : \;  \{f_{x,C}\}  \times \{ g_{x,C}  \} \longmapsto  \sum_{x \in C} \tr\nolimits_{k(x)/k} \circ
\res\nolimits_{x,C} (f_{x,C} \, g_{x,C}  \,  \omega)   \, \mbox{,}
\end{equation}
where $\{ f_{x,C} \}$ and $\{ g_{x,C}  \}$ are from ${\prod\limits_{x \in C}}' K_{x,C} = \da_X$, and if $K_{x, C} = \prod\limits_i K_i$, then
${ \res_{x, C} : \Omega^2_{K_{x,C}/k} \to k(x)}$ equals $\sum\limits_i \tr_{k_i/ k(x)}  \circ \res_{K_i} $, where
\begin{equation}  \label{arrows}
\res\nolimits_{K_i} \; :  \; \Omega^2_{K_i/k(x)}  \lrto \Omega^2_{K_i/k_i}  \lrto  \tilde{\Omega}^2_{K_i/k_i}   \lrto k_i  \, \mbox{,}
\end{equation}
where $K_i \simeq k_i((u))((t))$ and this isomorphism is a homeomorphism under natural topologies. The first map in formula~\eqref{arrows} is the natural map, the second map is the map to ``continuous differential forms'', i.e. to the quotient module by a $K_i$-submodule generated by elements ${f_1 df_2 \wedge df_3 - f_1 df_2 \wedge \left(\frac{\partial{f_3}}{\partial{u}} du +\frac{\partial{f_3}}{\partial{t}} dt  \right)}$, and the last map is
$$
\sum_{j,l} a_{j,l} u^j t^l du \wedge dt  \longmapsto a_{-1,-1}  \, \mbox{,}
$$
where $a_{j,l} \in k_i$. We note also that the sum~\eqref{self-dual} is finite.

We now give a theorem.

\begin{Th}  \label{Th2}
Let $\E$ be a locally free sheaf of $\oo_X$-modules of rank $n$ on a smooth projective surface $X$ over a perfect field $k$. We have
$$
f_{\E} =   -\frac{1}{2} K \cdot  c_1(\E)  - \frac{1}{2}c_1(\E)^2 + c_2(\E)
=
 -\frac{1}{2} K \cdot  c_1(\E) -{\rm ch}_2(\E)   \, \mbox{,}
$$
where $K \simeq \oo_X(\omega)$,   $\omega \in \Omega^2_{k(X)/k}$, $\omega \ne 0$.
\end{Th}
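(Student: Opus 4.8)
The plan is to first reduce to the case $n=1$ by the splitting principle, exactly as in the proof of Theorem~\ref{Th1}. Both sides are additive in short exact sequences of locally free sheaves: for the left hand side this is Proposition~\ref{tr-bl}(\ref{item1}), and for the right hand side it follows since $c_1$ and ${\rm ch}_2$ are additive in the Chern character. Both sides are also invariant under the blow-up $\pi\colon Y\to X$ of a point: for the left hand side by Proposition~\ref{tr-bl}(\ref{item2}), and for the right hand side because ${\rm ch}_2(\pi^*\E)={\rm ch}_2(\E)$ and, by the projection formula, $K_Y\cdot c_1(\pi^*\E)=(\pi^*K_X+R)\cdot\pi^*c_1(\E)=K_X\cdot c_1(\E)$, where $R$ is the exceptional curve and $R\cdot\pi^*c_1(\E)=0$. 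Hence it suffices to treat an invertible sheaf $\E=\oo_X(D)$, where, since for $n=1$ one has $(D,D+K)=c_1(\E)^2+K\cdot c_1(\E)=2\,{\rm ch}_2(\E)+K\cdot c_1(\E)$, the claim becomes
$$
f_{\oo_X(D)}=-\frac{1}{2}\,(D,\,D+K)\,\mbox{,}\qquad K=(\omega)\,\mbox{.}
$$

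\textbf{Rewriting $f_{\oo_X(D)}$ as one torsor element.} Next I would compute $f_{\oo_X(D)}$ directly from the group law~\eqref{group law}. By Remark~\ref{trivial} we may take $\alpha_{02}=j_{2,D}$ and $\alpha_{01}=j_{1,D}$, so that $\alpha_{10}=j_{1,D}^{-1}$ and $\alpha_{21}\in\da_{12}^*$, with $\alpha_{02}\da_{12}=\da_{12}(-D)$, $\alpha_{10}\da_{12}=\da_{12}(D)$ and, by the cocycle identity, $\alpha_{02}\alpha_{21}=\alpha_{10}^{-1}=j_{1,D}\in\da_{01}^*$. Using the canonical lifts (via $\mu$, via $0$, via $\nu$ from Proposition~\ref{split}) together with the equivariance $h(\nu_{E_1,E_2})=\nu_{hE_1,hE_2}$ for $h\in GL_1(\da_{01})$ established in the proof of Proposition~\ref{split}, a short computation with~\eqref{group law} collapses the middle factor and gives
$$
f_{\oo_X(D)}=\mu_{\da_{12},\,\da_{12}(-D)}\otimes\nu_{\da_{12}(-D),\,\da_{12}}\ \in\ \Dim(\da_{12}\mid\da_{12})=\Z\,\mbox{.}
$$

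\textbf{Passing to the dual side.} The heart of the argument is the self-duality~\eqref{self-dual}. It identifies $\da_X$ with its continuous dual, and the reciprocity laws of~\cite{Par} give that $\da_{01}$ and $\da_{02}$ are Lagrangian, $\da_{01}^{\perp}=\da_{01}$ and $\da_{02}^{\perp}=\da_{02}$, together with the adelic form of Serre duality $\da_{12}(E)^{\perp}=\da_{12}(K-E)$. I would then verify that, under the induced isomorphism~\eqref{dual} of $\Z$-torsors, an open linearly compact $\da_{02}$-subspace annihilates to an open linearly compact $\da_{02}$-subspace and a discrete $\da_{01}$-subspace to a discrete $\da_{01}$-subspace; hence the duality carries $\mu_{E_1,E_2}$ to $\mu_{E_2^{\perp},E_1^{\perp}}$ and $\nu_{E_1,E_2}$ to $\nu_{E_2^{\perp},E_1^{\perp}}$. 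Applying this to the displayed formula, and tracking the sign contributed by~\eqref{dual}, turns the computation based at $\da_{12}$ into one based at $\da_{12}^{\perp}=\da_{12}(K)$:
$$
f_{\oo_X(D)}=-\bigl(\nu_{\da_{12}(K),\,\da_{12}(K+D)}\otimes\mu_{\da_{12}(K+D),\,\da_{12}(K)}\bigr)\,\mbox{.}
$$
The hard part of the whole proof is precisely this step: checking the compatibility of the Kapranov $\Z$-torsor duality~\eqref{dual} with the special elements $\mu,\nu$ and with the residue pairing, and in particular pinning down the overall sign.

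\textbf{Intersection-index bookkeeping and conclusion.} Finally I would evaluate the right hand side by the methods of Proposition~\ref{prop-inter-index}. Two computations parallel to that proof, using the cocycle rules for $\mu$ and $\nu$ and formula~\eqref{inter-index}, give: (i) the identity of Proposition~\ref{prop-inter-index} holds with the reference lattice $\da_{12}$ replaced by any $\da_{12}(B)$, whence
$$
\nu_{\da_{12}(K),\,\da_{12}(K+D)}\otimes\mu_{\da_{12}(K+D),\,\da_{12}(K)}=\mu_{\da_{12}(K),\,\da_{12}(K-D)}\otimes\nu_{\da_{12}(K-D),\,\da_{12}(K)}+(D,D)\,\mbox{;}
$$
and (ii) a base-change identity
$$
\mu_{\da_{12}(K),\,\da_{12}(K-D)}\otimes\nu_{\da_{12}(K-D),\,\da_{12}(K)}=f_{\oo_X(D)}+(K,D)\,\mbox{.}
$$
Substituting (i) and (ii) into the equality of the previous paragraph yields $f_{\oo_X(D)}=-\bigl(f_{\oo_X(D)}+(D,D)+(K,D)\bigr)$, that is $2\,f_{\oo_X(D)}=-(D,\,D+K)$, which is the asserted formula for $n=1$. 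The splitting principle from the first paragraph then upgrades it to arbitrary rank $n$, and comparing with Theorem~\ref{Th1} gives the Riemann-Roch theorem (Corollary~\ref{Cor1}). Besides the sign in the duality step, the remaining delicate point is establishing the base-independent form of Proposition~\ref{prop-inter-index} and identity (ii), both of which are intersection-index bookkeeping of the same type already carried out there.
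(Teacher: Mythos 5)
Your proposal is correct and rests on the same two pillars as the paper's own proof: reduction to $n=1$ via additivity and blow-up invariance, and then the self-duality of $\da_X$ under the residue pairing~\eqref{self-dual} together with $\da_{12}(D)^{\perp}=\da_{12}((\omega)-D)$, $\da_{01}^{\perp}=\da_{01}$, $\da_{02}^{\perp}=\da_{02}$ and intersection-index bookkeeping of the type in Proposition~\ref{prop-inter-index}. The execution, however, is genuinely different. The paper works at the group level: it introduces the isomorphism $\varphi\colon\widetilde{GL_1(\da_X)}\to\widetilde{GL_1(\da_X)}_{\da_{12}^{\perp}}$, $(g,d)\mapsto(g^{-1},d)$, uses $f_{\E}=\varphi(\widetilde{\alpha_{02}})\varphi(\widetilde{\alpha_{21}})\varphi(\widetilde{\alpha_{10}})$, identifies $\varphi$ of each canonical lift as the corresponding $\mu$- or $\nu$-lift relative to the dual lattice $\da_{12}((\omega))$, and reduces everything to the single difference $d-c=(D,(\omega))$, which after a commutator manipulation (contributing the $(\E,\E)$ term) is exactly the second identity of Proposition~\ref{prop-inter-index} with $T=D$, $S=(\omega)$. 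You instead collapse $f_{\oo_X(D)}$ to the single torsor element $\mu_{\da_{12},\da_{12}(-D)}\otimes\nu_{\da_{12}(-D),\da_{12}}$ --- which I checked is a correct consequence of the group law~\eqref{group law}, the cocycle identity and the equivariance of $\mu,\nu$ --- and dualize that element directly; the price is the two extra identities (i) and (ii), which are numerically correct and do follow from the same commutator argument as Proposition~\ref{prop-inter-index}, whereas the paper's ordering of factors lands on that proposition exactly as stated. The step you rightly single out as the crux, namely that the canonical isomorphism $\Dim(E_1\mid E_2)\simeq\Dim(E_1^{\perp}\mid E_2^{\perp})$ built from~\eqref{dim-iso} and~\eqref{dual} carries $\mu$ to $\mu$ and $\nu$ to $\nu$ with the sign you assert, is precisely the content of the paper's unproved displayed formulas for $\varphi(\widetilde{\alpha_{01}})$ and $\varphi(\widetilde{\alpha_{02}})$, so your sketch is no less complete than the original there; note only that $\da_{02}^{\perp}=\da_{02}$ alone is not quite enough --- one also needs $(E\cap\da_{02})^{\perp}=E^{\perp}+\da_{02}$ and its $\da_{01}$-analogue so that the distinguished open linearly compact (resp.\ discrete) subspaces match up on the two sides.
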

From Theorem~\ref{Th1} and Theorem~\ref{Th2} we immediately obtain a corollary.
\begin{cons}[Riemann-Roch theorem]  \label{Cor1}
We have
$$
\chi(\E)  - n \chi(\oo_X) = \frac{1}{2} c_1(\E) \cdot (c_1(\E) - K) - c_2(\E)  \, \mbox{.}
$$
\end{cons}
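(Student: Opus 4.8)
The corollary itself is a formal consequence of the two theorems, which compute the single integer $f_{\E}$ in two different ways. I would simply equate their right-hand sides: Theorem~\ref{Th1} gives $f_{\E}=(\chi(\E)-n\chi(\oo_X))-c_1(\E)^2+2c_2(\E)$, while Theorem~\ref{Th2} gives $f_{\E}=-\tfrac12 K\cdot c_1(\E)-\tfrac12 c_1(\E)^2+c_2(\E)$. Cancelling $f_{\E}$ and solving for the Euler-characteristic term yields
$$
\chi(\E)-n\chi(\oo_X)=\tfrac12 c_1(\E)^2-\tfrac12 K\cdot c_1(\E)-c_2(\E)=\tfrac12 c_1(\E)\cdot(c_1(\E)-K)-c_2(\E),
$$
which is the asserted formula; it inherits the hypothesis that $k$ is perfect from Theorem~\ref{Th2}. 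So the corollary presents no difficulty of its own, and all the content lies in the second computation of $f_{\E}$. Since the corollary stands or falls with Theorem~\ref{Th2}, whose proof is the genuinely new ingredient of this section, I outline how I would prove \emph{that}.

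The first step is to reduce to line bundles, exactly as in the proof of Theorem~\ref{Th1}. By Proposition~\ref{tr-bl} the left-hand side $f_{\E}$ is additive in short exact sequences of locally free sheaves and invariant under blow-ups of points; the target expression $-\tfrac12 K\cdot c_1(\E)-{\rm ch}_2(\E)$ is likewise additive (since $c_1$ and the Chern-character component ${\rm ch}_2$ are additive on short exact sequences) and blow-up invariant (the exceptional curve meets no pullback class, so $K\cdot c_1$, $c_1^2$ and $c_2$ are unchanged). By the splitting principle it therefore suffices to treat $n=1$. Writing $\E=\oo_X(D)$, so that $c_1(\E)=D$ and $c_2(\E)=0$, the goal becomes
$$
f_{\oo_X(D)}=-\tfrac12 D\cdot(D+K),\qquad K=(\omega),
$$
with the explicit transition data $\alpha_{02}=j_{2,D}$, $\alpha_{01}=j_{1,D}$ of Remark~\ref{trivial} and $\alpha_{21}=j_{2,D}^{-1}j_{1,D}\in\da_{12}^{*}$.

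The heart of the matter is a second evaluation of $f_{\oo_X(D)}$ obtained by passing to the ``dual side''. The residue pairing~\eqref{self-dual} attached to $\omega$ identifies $\da_X$ with its continuous dual $\check{\da_X}$ as a $C_2$-space, and I would first record the two orthogonality relations it induces: $\da_{12}(D)^{\perp}=\da_{12}(K-D)$ (the adelic incarnation of Serre duality, so that the reference lattice $\da_{12}$ satisfies $\da_{12}^{\perp}=\da_{12}(K)$), together with the compatible statement that the subrings $\da_{01}$ and $\da_{02}$ are interchanged, up to the twist by $\omega$, by the pairing. Transporting the product of canonical lifts $\widetilde{\alpha_{02}}\cdot\widetilde{\alpha_{21}}\cdot\widetilde{\alpha_{10}}$ through this self-duality then gives a formula for $f_{\oo_X(D)}$ in which the $\mu$- and $\nu$-splittings (hence the $H^0$ and $H^2$ contributions) have exchanged roles; the shift $\da_{12}^{\perp}=\da_{12}(K)$ between the chosen lattice and its orthogonal complement injects the canonical class, and comparing this dual expression with the direct one of Lemma~\ref{lem1} isolates the term $-\tfrac12 K\cdot D$. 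The factor $\tfrac12$ appears because $f_{\oo_X(D)}$ is being measured against its own image under the duality involution.

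The main obstacle is precisely this transport step: one must make rigorous how central extension~\eqref{1centr-ext}, the $\Dim$-torsors, and the special elements $\mu_{E_1,E_2}$ and $\nu_{E_1,E_2}$ transform under the isomorphism $\da_X\simeq\check{\da_X}$ induced by the $\omega$-pairing, and verify the orthogonality relations above. These rest on the reciprocity laws for the two-dimensional residues recalled around~\eqref{arrows}, and this is exactly where perfectness of $k$ is needed, in order to run the residue theory on the two-dimensional local fields $k'((u))((t))$. Once the behaviour of $\mu$ and $\nu$ under duality is pinned down, the emergence of $K=(\omega)$ and of the coefficient $-\tfrac12$ becomes a bookkeeping computation dual to, and no harder than, Lemma~\ref{lem1}.
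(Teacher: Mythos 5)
Your proof of the corollary is exactly the paper's: the paper derives it immediately by equating the two expressions for $f_{\E}$ from Theorems~\ref{Th1} and~\ref{Th2}, and your arithmetic and the inherited perfectness hypothesis are both correct. Your supplementary sketch of Theorem~\ref{Th2} also follows the paper's actual route (reduction to $n=1$, then the $\omega$-residue self-duality of $\da_X$ and the relations $\da_{12}(D)^{\perp}=\da_{12}((\omega)-D)$), with only minor descriptive slips --- in the paper $\da_{01}$ and $\da_{02}$ are each self-orthogonal rather than interchanged, and under the isomorphism $\varphi$ the elements $\mu$ and $\nu$ keep their types but have their reference lattices shifted by $(\omega)$, the discrepancy being evaluated by Proposition~\ref{prop-inter-index} as the term $(D,(\omega))$.
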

\begin{nt} \em
If $\E$ is an invertible sheaf on $X$, then  a number
$$p_a(\E)= 1 + \frac{1}{2} \E \cdot (\E + K)$$
is called {\em a  virtual arithmetic genus} of $\E$ (see, e.g.,~\cite[Ch.~IV, \S~2.8]{S} ).
Therefore, in this case  we have
$$
f_{\E} = 1 - p_a(\E)  \, \mbox{.}
$$
\end{nt}
\begin{proof}
It is enough to prove the first equality in the statement of Theorem~\ref{Th2}.
By the same arguments as in the beginning of the proof of Theorem~\ref{Th1} we obtain that  it is enough to suppose that $n =1$, which we will do.

For any $k$-vector subspace $V \subset \da_X$ we denote by $V^{\perp}$ the annihilator of $V$ in $\da_X$ with respect to the pairing~\eqref{self-dual}.
From the reciprocity laws on $X$ for residues of differential two-forms on two-dimensional local fields  (these reciprocity laws are ``along an irreducible curve'' and ``around a point''),  it is possible to obtain  (see~\cite{Par}) that for any divisor $D$ on $X$
\begin{equation}  \label{perp}
\da_{12}(D)^{\perp}= \da_{12}((\omega) - D) \, \mbox{,} \qquad \da_{02}^{\perp} = \da_{02}  \, \mbox{,}
\qquad \da_{01}^{\perp} = \da_{01}  \, \mbox{.}
\end{equation}

There is a canonical isomorphism of groups   (see~\cite[\S~5.5.5]{OsipPar1})
$$
\varphi \; : \;
\widetilde{GL_1(\da_X)}  \lrto \widetilde{GL_1(\da_X)}_{\da_{12}^{\perp}}  \, \mbox{,}
$$
where the central extension $\widetilde{GL_1(\da_X)}_{\da_{12}^{\perp}}$ is constructed similar to the central extension $\widetilde{GL_1(\da_X)}$,
but starting from the lattice $\da_{12}^{\perp}$ instead of the lattice $\da_{12}$ (cf.~Remark~\ref{Aut}). More precisely, $\widetilde{GL_1(\da_X)}_{\da_{12}^{\perp}}$ consists of all pairs $(g, d)$, where $g \in GL_1(\da_X)$ and
${d \in \Dim(\da_{12}^{\perp}  \mid g \da_{12}^{\perp})}$ with the multiplication law as in formula~\eqref{group law}. Explicitly,
isomorphism~$\varphi$
is given as
$$
\varphi((g,d))  = (g^{-1}, d)  \, \mbox{,}
$$
where we use canonical isomorphism $\Dim(\da_{12} \mid g \da_{12})  \simeq \Dim(\da_{12}^{\perp} \mid g^{-1} \da_{12}^{\perp})$, which is  based
on the equality $g^{-1} \da_{12}^{\perp} = (g \da_{12})^{\perp}  $  and on a canonical isomorphism \linebreak
$\Dim(E_1 \mid E_2) \simeq   \Dim (E_1^{\perp} \mid E_2^{\perp})$, where $E_1, E_2$ are lattices in $\da_X$. The last isomorphism    comes from the following chain of isomorphisms, where  $E_3 $ is a lattice such that $E_3 \subset E_1$, $E_3 \subset E_2$:
\begin{gather*}
\Dim(E_1 \mid E_2) \simeq \Dim(E_1 \mid E_3) \otimes_{\Z}  \Dim(E_3 \mid E_2) \simeq
\Dim(E_1 / E_3)^* \otimes_{\Z} \Dim(E_2/ E_3) \simeq  \\ \simeq
\Dim(E_3^{\perp} / E_1^{\perp})  \otimes_{\Z} \Dim(E_3^{\perp}/ E_2^{\perp})^* \simeq \Dim(E_1^{\perp} \mid E_2^{\perp})  \, \mbox{,}
\end{gather*}
where ${}^*$ means the dual $\Z$-torsor and we used  canonical isomorphisms~\eqref{dim-iso} and~\eqref{dual}.

We note that isomorphism $\varphi$ restricted to the central subgroup $\Z \subset \widetilde{GL_1(\da_X)} $ is the identity morphism to the central subgroup $\Z \subset \widetilde{GL_1(\da_X)} $.

The chosen basis $e_0$ of $\E$ at the generic point of $X$ gives the embedding of $\E$ to the constant sheaf $k(X)$ on $X$. Therefore $\E = \oo_X(D)$
for some divisor $D$ on $X$. We fix also the other bases for $\E$ and hence the transitions elements $\alpha_{ij}$ for $\E$, where $i \ne j $ from
$\{ 1,2 \}$, as in Section~\ref{trivializations}.

Therefore, we have
$$
f_{\E} = \varphi(f_{\E}) =
\varphi(\widetilde{\alpha_{02}}  \cdot \widetilde{\alpha_{21}} \cdot  \widetilde{\alpha_{10}})=
\varphi(\widetilde{\alpha_{02}})  \cdot \varphi(\widetilde{\alpha_{21}}) \cdot  \varphi(\widetilde{\alpha_{10}})  \, \mbox{.}
$$
Hence,  the proof of the theorem will follow from the proof of the following formula
$$
\varphi(\widetilde{\alpha_{01}})  \cdot \varphi(\widetilde{\alpha_{12}}) \cdot  \varphi(\widetilde{\alpha_{20}}) =
\widetilde{\alpha_{02}}  \cdot \widetilde{\alpha_{21}} \cdot  \widetilde{\alpha_{10}}
+ (\E, \E) + (\E, \oo_X(\omega))  \, \mbox{,}
$$
where we used that
$$
-(\widetilde{\alpha_{02}}  \cdot \widetilde{\alpha_{21}} \cdot  \widetilde{\alpha_{10}}) =
\widetilde{\alpha_{01}}  \cdot \widetilde{\alpha_{12}} \cdot  \widetilde{\alpha_{20}}  \, \mbox{.}
$$

From formula~\eqref{inter-index-new} for the intersection index of invertible sheaves we have
$$
(\E, \E) = \langle \widetilde{\alpha_{01}}, \widetilde{\alpha_{02}}   \rangle =
\widetilde{\alpha_{01}} \cdot  \widetilde{\alpha_{02}} \cdot \widetilde{\alpha_{10}} \cdot  \widetilde{\alpha_{20}}  \, \mbox{.}
$$
Therefore we obtain
\begin{gather*}
\widetilde{\alpha_{02}}  \cdot \widetilde{\alpha_{21}} \cdot  \widetilde{\alpha_{10}} + (\E, \E)=
\widetilde{\alpha_{02}}  \cdot \widetilde{\alpha_{21}} \cdot  \widetilde{\alpha_{10}} \cdot
\widetilde{\alpha_{01}} \cdot  \widetilde{\alpha_{02}} \cdot \widetilde{\alpha_{10}} \cdot  \widetilde{\alpha_{20}}
= \\ =
\widetilde{\alpha_{02}}  \cdot \left( \widetilde{\alpha_{21}} \cdot  \widetilde{\alpha_{02}} \cdot
\widetilde{\alpha_{10}}   \right) \cdot  \widetilde{\alpha_{02}}^{-1}=
\widetilde{\alpha_{21}} \cdot  \widetilde{\alpha_{02}}   \cdot \widetilde{\alpha_{10}}= \\ =
\widetilde{\alpha_{12}}  \cdot \left(  \widetilde{\alpha_{21}} \cdot  \widetilde{\alpha_{02}} \cdot
\widetilde{\alpha_{10}}     \right) \widetilde{\alpha_{12}}^{-1} =
\widetilde{\alpha_{02}}  \cdot \widetilde{\alpha_{10}} \cdot  \widetilde{\alpha_{21}}  \, \mbox{,}
\end{gather*}
where we used that the conjugation does not change the result. Hence, to prove the theorem it is enough to prove the formula
$$
\varphi(\widetilde{\alpha_{01}})  \cdot \varphi(\widetilde{\alpha_{12}}) \cdot  \varphi(\widetilde{\alpha_{20}})  -
\widetilde{\alpha_{02}}  \cdot \widetilde{\alpha_{10}} \cdot  \widetilde{\alpha_{21}}  = (\E, \oo_X(\omega)) \, \mbox{.}
$$

Let $c = \widetilde{\alpha_{02}}  \cdot \widetilde{\alpha_{10}} \cdot  \widetilde{\alpha_{21}}$. Hence
$
c \cdot \widetilde{\alpha_{12}} = \widetilde{\alpha_{02}}  \cdot \widetilde{\alpha_{10}}
$. Therefore, by Lemma~\ref{lem1} we have
$$
c = \nu_{\da_{12}, \da_{12}(\E)}  - \mu_{\da_{12}, \da_{12}(\E)}  \, \mbox{.}
$$

Now we calculate $d =  \varphi(\widetilde{\alpha_{01}})  \cdot \varphi(\widetilde{\alpha_{12}}) \cdot  \varphi(\widetilde{\alpha_{20}}) =
 \varphi(\widetilde{\alpha_{12}}) \cdot  \varphi(\widetilde{\alpha_{20}})  \cdot \varphi(\widetilde{\alpha_{01}}) $.
We have
\begin{equation}   \label{form-d}
 d \cdot \varphi(\widetilde{\alpha_{21}}) = \varphi(\widetilde{\alpha_{20}})  \cdot \varphi(\widetilde{\alpha_{01}})
\end{equation}
From the construction of $\varphi$ and formulas~\eqref{perp} we have
\begin{gather*}
\varphi (\widetilde{\alpha_{01}})= \varphi ((\alpha_{01}, \nu_{\da_{12}, \alpha_{01} \da_{12}})) =
(\alpha_{10}, \nu_{\da_{12}^{\perp}, \alpha_{10} \da_{12}^{\perp}} ) =
(\alpha_{10}, \nu_{\da_{12}((\omega)), \da_{12}((\omega) + D)} )  \\
\varphi (\widetilde{\alpha_{02}})= \varphi ((\alpha_{02}, \mu_{\da_{12}, \alpha_{02} \da_{12}})) =
(\alpha_{20}, \mu_{\da_{12}^{\perp}, \alpha_{20} \da_{12}^{\perp}} ) =
(\alpha_{20}, \mu_{\da_{12}((\omega)), \da_{12}((\omega) + D)} )  \\
\varphi(\widetilde{\alpha_{21}})= \varphi((\alpha_{21}, 0  ))= (\alpha_{12}, 0)
 \, \mbox{.}
\end{gather*}
From these formulas and formula~\eqref{form-d}, by the same reason as in calculation of $c$ above (see  the proof of Lemma~\ref{lem1})
we obtain that
$$
d = \nu_{\da_{12}((\omega)), \da_{12}((\omega) +D)}  - \mu_{\da_{12}((\omega)) , \da_{12}((\omega) + D)} \, \mbox{.}
$$
Now we have
\begin{gather*}
d -c = \left(\nu_{\da_{12}((\omega)), \da_{12}((\omega) +D)}  - \mu_{\da_{12}((\omega)) , \da_{12}((\omega) + D)} \right) -
\left( \nu_{\da_{12}, \da_{12}(D)}  - \mu_{\da_{12}, \da_{12}(D)} \right) = \\
=\left(\nu_{\da_{12}((\omega)), \da_{12}((\omega) +D)}  - \mu_{\da_{12}((\omega)) , \da_{12}((\omega) + D)} \right) +
\left(\mu_{\da_{12}, \da_{12}(D)}  - \nu_{\da_{12}, \da_{12}(D)}  \right) + \\ +
\left(
\nu_{\da_{12}(D), \da_{12}(\omega)}  - \nu_{\da_{12}(D), \da_{12}(\omega)}
\right) =
\left(\mu_{\da_{12}, \da_{12}(D)}  - \nu_{\da_{12}, \da_{12}(D)}  \right) + \\ +
\left(
\nu_{\da_{12}(D), \da_{12}(\omega)}  - \nu_{\da_{12}(D), \da_{12}(\omega)}
\right) +
\left(\nu_{\da_{12}((\omega)), \da_{12}((\omega) +D)}  - \mu_{\da_{12}((\omega)) , \da_{12}((\omega) + D)} \right) = \\
= \mu_{\da_{12}, \da_{12}(D)}  \otimes \nu_{\da_{12}(D), \da_{12}(\omega)} \otimes \nu_{\da_{12}((\omega)), \da_{12}((\omega) +D)} - \\
-
\nu_{\da_{12}, \da_{12}(D)} \otimes \nu_{\da_{12}(D), \da_{12}(\omega)}  \otimes \mu_{\da_{12}((\omega)) , \da_{12}((\omega) + D)} = \\ =
\mu_{\da_{12}, \da_{12}(D)} \otimes \nu_{\da_{12}(D),  \da_{12}((\omega) +D)}  -
\nu_{\da_{12},  \da_{12}(\omega)}  \otimes \mu_{\da_{12}((\omega)) , \da_{12}((\omega) + D)} \, \mbox{.}
\end{gather*}
Hence and by Proposition~\ref{prop-inter-index}, we obtain $d -c = (D, (\omega))$.

\end{proof}

\vspace{0.3cm}

\noindent Steklov Mathematical Institute of Russsian Academy of Sciences,  Moscow, Russian Federation, {\em and}

\noindent National Research University Higher School of Economics,  Moscow, Russian Federation,
{\em and}

\noindent National University of Science and Technology ``MISiS'',   Moscow, Russian Federation

\vspace{0.3cm}

\noindent {\it E-mail:}  ${d}_{-} osipov@mi{-}ras.ru$

\end{document}